\theoremstyle{definition} 
\theoremstyle{plain} \newtheorem{lemma}{Lemma}[section]
\theoremstyle{plain} 
\theoremstyle{plain} \newtheorem{theorem}{Theorem}[section]
\theoremstyle{plain} \newtheorem{corollary}{Corollary}[section]
\theoremstyle{plain} 
\theoremstyle{plain} 
\theoremstyle{plain} 
\theoremstyle{remark} 
\theoremstyle{remark} 
\theoremstyle{plain} 
\theoremstyle{remark} \newtheorem{remark}{Remark}[section]
\numberwithin{equation}{section}
\newcommand{\namedthm}[2]{\theoremstyle{plain}
   \newtheorem*{thm#1}{#1}\begin{thm#1}#2\end{thm#1}}
\def\bcw{\mathbin{\bigcirc\mkern-15mu\wedge}}
\title{Rigidity for Bach-flat metrics on manifolds with boundary and applications}
\author{Matthew J. Gursky}
\address{Department of Mathematics \\
         University of Notre Dame\\
         Notre Dame, IN 46556}
\author{Siyi Zhang}
\address{Department of Mathematics \\
         University of Notre Dame\\
         Notre Dame, IN 46556}
\date{\today}
\begin{document}
\maketitle
\section{Statement of main result}

In this paper we study a conformally invariant boundary value problem in four dimensions.  Our work is partially inspired by the following rigidity result of Hang-Wang in \cite{HW06}:

\namedthm{Theorem A}{(\cite{HW06}, Theorem 4.1) Let $(M,g)$ be a smooth $n$-dimensional compact Einstein
manifold with boundary $\Sigma$. If $\Sigma$ is totally geodesic and is isometric to $S^{n-1}$
with the standard metric, then $(M,g)$ is isometric to the hemisphere $S^n_{+}$
with the standard metric.
}

 This result can be viewed as a uniqueness statement for solutions of an overdetermined boundary value problem for Einstein metrics.  The assumption that the induced metric is round plays the role of the Dirichlet data, while the assumption that the boundary is totally geodesic is the Neumann data.   Theorem A states that the unique solution of the Einstein equation in $M^n$ satisfying both of these boundary conditions is the upper hemisphere with the standard metric.  For classical elliptic PDE the model for such a uniqueness result is the famous symmetry theorem of Serrin \cite{Serrin}.

There is also a variational interpretation of Theorem A.  Given a Riemannian metric $g$ defined on the manifold with boundary $(M,\Sigma)$, let $R_g$ denote the scalar curvature of $g$ and $H_g$ the mean curvature (i.e., the trace of the second fundamental form) of the boundary.  Let $\mathfrak{M}(M)_1$ denote the space of unit volume metrics on $M$.  In \cite{Araujo}, Araujo showed that critical points of the functional
\begin{align} \label{EH}
\mathcal{E}_b : g \mapsto \int_M R_g \, dv_g + 2 \int_{\Sigma} H_g \, d\sigma_g
\end{align}
restricted to $\mathfrak{M}_1$ correspond to Einstein metrics with totally geodesic boundary.  Therefore, we can restate Theorem A in the following way:

\namedthm{Theorem B}{The upper hemisphere $S^{n}_{+}$ with the standard metric is the unique critical point (up to isometry) of $\mathcal{E}_b \big\vert_{\mathfrak{M}_1}$ such that the induced metric on $\Sigma$ is isometric to the round sphere.
}

In this paper we consider a high order version of Theorems A and B in four dimensions.  In order to state our results we will need some additional notation. 
 
From now on, we assume $(M^4,\Sigma^3 = \partial M^4,g)$ is a compact four-dimensional Riemannian manifold with boundary.  Let $W_g$ denote the Weyl curvature tensor of $g$ and $L$ the second fundamental form of the boundary.   In place of $\mathcal{E}_b$, consider the functional
\begin{align} \label{W}
\mathcal{W}_b : g \mapsto \int_{M^4}||W_{g}||^2\, dv_g + 2\oint_{\Sigma^3}W_{i0j0}L^{ij}\, d{\sigma}_g,
\end{align}
where $0$ subscripts correspond to components of a tensor with respect to the outward unit normal, and $\| \cdot \|$ is the norm of $W$ as a section of $End(\Lambda^2(M))$.  This functional generalizes the Weyl functional
\begin{align*}
\mathcal{W} : g \mapsto \int_{M^4} \| W_g \|^2 \, dv_g
\end{align*}
for closed manifolds.  Critical points of $\mathcal{W}$ are metrics with vanishing Bach tensor $B_{\alpha \beta}$ defined by 
\begin{align} \label{BF} 
B_{\alpha\beta} = \nabla^{\gamma}\nabla^{\delta} W_{\alpha\gamma\beta\delta} + P^{\gamma\delta}W_{\alpha\gamma\beta\delta},
\end{align}
where $P$ is the Schouten tensor (see Section \ref{prelim} for more details).  Four-manifolds with vanishing Bach tensor are also called {\em Bach-flat} manifolds.  We remark that $\mathcal{W}$ and $\mathcal{W}_b$ are conformally invariant, hence Bach-flatness and $S$-flatness are conformally invariant conditions.

As pointed out in \cite{CG18}, critical points of $\mathcal{W}_b$ are Bach-flat metrics such that the tensor
\begin{align} \label{S}
S_{ij} := \nabla^{\alpha}W_{{\alpha}i0j} + \nabla^{\alpha}W_{{\alpha}j0i} - \nabla^0W_{0i0j} + \frac{4}{3}HW_{0i0j}
\end{align}
vanishes on the boundary.  In this case, we will say that the boundary is $S$-flat.  Since the Bach-flat condition is fourth order in the metric, it should be possible to specify a boundary condition in addition to $S$-flatness.  In the Appendix, we prove the following:

\begin{theorem} \label{AppThm}  Given a compact four-dimensional manifold with boundary $(M^4, \Sigma^3 = \partial M^4)$, let $\mathcal{M}^{0}(M^4,\Sigma^3)$ denote the space of all Riemannian metrics on $(M^4,\Sigma^3)$ such that $\Sigma^3$ is umbilic.  Then $g$ is a critical point of 
\begin{align*}
\mathcal{W} \big|_{\mathcal{M}^{0}(M^4,\Sigma^3)},
\end{align*}
if and only if $g$ is Bach-flat and $\Sigma^3$ is $S$-flat and umbilic. 
\end{theorem}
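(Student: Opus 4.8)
The plan is to reduce Theorem~\ref{AppThm} to the description of the unconstrained critical points of $\mathcal{W}_b$ recalled in the introduction, by exploiting the fact that the boundary integrand in \eqref{W} is killed by the umbilicity condition. Specifically, I would first record that if $\Sigma^3$ is umbilic then $W_{i0j0}L^{ij}\equiv 0$: on $\mathcal{M}^{0}(M^4,\Sigma^3)$ one has $L_{ij}=\tfrac{H}{3}g_{ij}$, so $W_{i0j0}L^{ij}=\tfrac{H}{3}g^{ij}W_{i0j0}$, and since $W_g$ is totally trace-free while $W_{0000}=0$ by the algebraic symmetries of the Weyl tensor, evaluating the trace in an adapted orthonormal frame gives $g^{ij}W_{i0j0}=-W_{0000}=0$. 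Hence $\mathcal{W}$ and $\mathcal{W}_b$ coincide as functionals on $\mathcal{M}^{0}(M^4,\Sigma^3)$, so their restrictions to $\mathcal{M}^{0}(M^4,\Sigma^3)$ have the same differential along $\mathcal{M}^{0}(M^4,\Sigma^3)$ and hence the same critical points. It therefore suffices to prove that $g\in\mathcal{M}^{0}(M^4,\Sigma^3)$ is a critical point of $\mathcal{W}_b\big|_{\mathcal{M}^{0}(M^4,\Sigma^3)}$ if and only if $g$ is Bach-flat and $\Sigma^3$ is $S$-flat (umbilicity being built into $\mathcal{M}^{0}$).

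Next I would use the first-variation formula for $\mathcal{W}_b$ underlying the characterization of \cite{CG18}: at any metric $g$,
\begin{align*}
\frac{d}{dt}\Big|_{t=0}\mathcal{W}_b(g+th)=\int_{M^4}B_{\alpha\beta}\,h^{\alpha\beta}\,dv_g+\oint_{\Sigma^3}\mathcal{S}(h)\,d\sigma_g ,
\end{align*}
where $B$ is the Bach tensor \eqref{BF} and the boundary integrand $\mathcal{S}(h)$ depends only on the tangential components $h^{ij}\big|_{\Sigma^3}$ and is a fixed nonzero multiple of $S_{ij}h^{ij}$, with $S$ as in \eqref{S}; in particular $g$ is an unconstrained critical point of $\mathcal{W}_b$ precisely when $B\equiv 0$ in $M^4$ and $S\equiv 0$ on $\Sigma^3$. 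The ``if'' direction of the reduced statement is then immediate: a Bach-flat metric with $S$-flat umbilic boundary lies in $\mathcal{M}^{0}(M^4,\Sigma^3)$ and annihilates the right-hand side above for \emph{every} $h$, in particular for every $h$ tangent to $\mathcal{M}^{0}(M^4,\Sigma^3)$.

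For the ``only if'' direction, suppose $g\in\mathcal{M}^{0}(M^4,\Sigma^3)$ is a critical point of $\mathcal{W}_b\big|_{\mathcal{M}^{0}(M^4,\Sigma^3)}$. Variations $h$ compactly supported in the interior of $M^4$ leave the boundary unchanged and so are tangent to $\mathcal{M}^{0}(M^4,\Sigma^3)$; for these the boundary term drops out and criticality forces $\int_{M^4}B_{\alpha\beta}h^{\alpha\beta}\,dv_g=0$, hence $B\equiv 0$. It remains to see that every symmetric $2$-tensor $\phi$ on $\Sigma^3$ arises as $h^{ij}\big|_{\Sigma^3}$ for some $h$ tangent to $\mathcal{M}^{0}(M^4,\Sigma^3)$; granting this, criticality gives $\oint_{\Sigma^3}S_{ij}\phi^{ij}\,d\sigma_g=0$ for every $\phi$, whence $S\equiv 0$ on $\Sigma^3$. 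The realization claim follows from the linearization of the umbilicity constraint $g\mapsto\mathring{L}_g$: with $\nu$ the outward unit normal, $\frac{d}{dt}|_{t=0}\mathring{L}_{ij}(g+th)$ equals a nonzero constant times the trace-free part of $\partial_\nu h_{ij}$ plus terms involving no normal derivative of $h$, so for any prescribed restriction $h\big|_{\Sigma^3}$ one may choose the trace-free part of $\partial_\nu h_{ij}$ along $\Sigma^3$ to make this vanish and then extend $h$ arbitrarily into $M^4$. Together with $B\equiv 0$ and the umbilicity of $\Sigma^3$, this shows $g$ is Bach-flat with $S$-flat umbilic boundary.

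The step I expect to require genuine work is pinning down the precise form of the boundary term $\mathcal{S}(h)$ in the first variation of $\mathcal{W}_b$ --- in particular that it involves only $h^{ij}\big|_{\Sigma^3}$ and no normal derivative of $h$, which is precisely the effect that the correction term $2\oint_{\Sigma^3}W_{i0j0}L^{ij}$ in \eqref{W} is designed to produce; the trace-free identity for $W$ on umbilic boundaries and the linearized-constraint computation are routine by comparison. A self-contained alternative avoiding \cite{CG18} is to compute $\frac{d}{dt}|_{t=0}\mathcal{W}(g+th)$ directly on $(M^4,\Sigma^3)$ --- whose boundary integrand \emph{does} carry a term in the trace-free part of $\partial_\nu h_{ij}$ --- and to observe that, once one restricts to $h$ tangent to $\mathcal{M}^{0}(M^4,\Sigma^3)$ (which expresses that term through $h\big|_{\Sigma^3}$) and uses $W_{i0j0}L^{ij}=0$, the boundary integrand collapses to a multiple of $S_{ij}h^{ij}$.
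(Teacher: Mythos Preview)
Your proposal is correct in outline, and in fact the ``self-contained alternative'' you sketch at the end is exactly the route the paper takes: the appendix computes the first variation of $\mathcal{W}$ directly (not $\mathcal{W}_b$), obtains a boundary integrand involving $\nabla_\beta v_{\mu\nu}$, imposes the linearized umbilic constraint (your formula for $\tfrac{d}{dt}\mathring L$ is the paper's equation~(\ref{Lp})) to eliminate the $\nabla_0 v_{ij}$ term, integrates by parts tangentially, and arrives at $\tfrac{d}{dt}\mathcal{W}=-\int_{M}B^{\mu\nu}v_{\mu\nu}+\oint_{\Sigma}S^{ij}v_{ij}$ for umbilic-preserving $v$.

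Your primary route---reducing to the unconstrained first variation of $\mathcal{W}_b$ from \cite{CG18}---is a legitimate reframing, but note that the step you flag as ``requiring genuine work'' (that the boundary term of $d\mathcal{W}_b$ is exactly a multiple of $S_{ij}h^{ij}$ with no normal-derivative contribution) is essentially the entire content of the proof; invoking it as a black box is fine if one accepts the citation, but the paper instead carries out the computation for $\mathcal{W}$ from scratch. One minor difference in the final step: rather than realizing \emph{every} symmetric $\phi$ on $\Sigma^3$ as $v_{ij}|_\Sigma$ for an umbilic-preserving $v$, the paper constructs such $v$ only for trace-free $\phi$ (by a trivial normal extension with $v_{0\alpha}=0$, $\nabla_0 v=0$), and then uses that $S$ itself is trace-free (Lemma~\ref{S tensor}) to conclude $S=0$. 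Your stronger realization claim is also true, via the freedom in $\partial_\nu h$ that you indicate, but the paper's version avoids that discussion.
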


We remark that when the boundary is umbilic the functionals $\mathcal{W}$ and $\mathcal{W}_b$ are actually the same, since $W_{i0j0}L^{ij} \equiv 0$.  

Our goal is to prove a uniqueness result for critical points of the variational problem described in Theorem \ref{AppThm}.  Due to conformal invariance of the functional and the constraint any uniqueness result can only hold modulo conformal changes of metric, unless a choice of conformal representative is specified.
A natural candidate for a conformal representative is a Yamabe metric.  

Given a compact manifold with boundary $(M^4,\Sigma^3,g)$, let $[g] = \{ e^{2f} g\, : f \in C^{\infty}(M) \}$ denote the conformal class of $M$.  If we restrict the functional $\mathcal{E}_b$ in (\ref{EH}) to unit-volume metrics in $[ g ]$, then critical points are precisely those metrics with constant scalar curvature and zero mean curvature on the boundary.  The {\em first Yamabe invariant} of $(M^4,\Sigma^3,g)$ is the infimum of $\mathcal{E}_b$ (restricted to unit volume metrics):
\begin{align} \label{Y1}
    \mathcal{Y}(M^4,\Sigma^3,[g]) = \inf_{\widetilde{g}\in[g],  Vol(\tilde{g}) = 1} \left(\int_M R_{\widetilde{g}}\,\, dv_{\widetilde{g}}+2\int_{\Sigma}H_{\widetilde{g}}\,\, d\sigma_{\widetilde{g}}\right)
\end{align}
By the work of Escobar \cite{Esc92}, there is always a metric $g_Y \in [g]$ that attains $\mathcal{Y}(M^4,\Sigma^3,[g])$ (see Section \ref{prelim} for more details).   However, $g_Y$ need not be unique: the round metric $g_0$ on $S^4_{+}$ is Yamabe, but for any conformal transformation $\varphi : (S^4_{+},S^3,g_0) \rightarrow (S^4_{+},S^3,g_0)$, the metric $\widetilde{g}_0 = \phi^{\ast}g_0$ is also Yamabe.  %On the other hand it follows from Theorem 2.1 of \cite{EscobarCPAM} that if $g_Y \in [g_0]$ is Yamabe, then $g_Y = \varphi^{\ast}g_0$ for some conformal transformation.  Note that the conformal group of the standard upper hemisphere $S^n_{+}$ consists of the subgroup of conformal maps that leave the boundary $S^{n-1}$ fixed.  Moreover, the restriction of any such map to the boundary gives a conformal map, and vice versa.

With these preliminaries, we can now state our main result:

\begin{theorem}\label{main}
Let $(M^4,\Sigma^3,g)$ be a Bach-flat Riemannian four-manifold with boundary such that the boundary is $S$-flat and umbilic.   Suppose for some Yamabe metric $g_Y \in [g]$ with $R_{g_Y} = 12$, the induced metric $g_Y \vert_{\Sigma^3}$ is isometric to $S^3$ with the standard metric.   Then $(M^4, \Sigma^3, g_Y)$ is isometric to the hemisphere $S^4_{+}$ with the standard metric.  
\end{theorem}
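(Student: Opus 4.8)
The plan is to reduce the problem to the Einstein case already settled by Theorem~A, so that the real content becomes the implication ``Bach-flat $+$ these boundary conditions $\Rightarrow$ Einstein''. First a normalization. All three hypotheses on $g$ --- Bach-flat, $S$-flat boundary, umbilic boundary --- are conformally invariant, so we may work throughout with $g=g_Y$. Since $g_Y$ realizes the first Yamabe invariant $\mathcal Y(M^4,\Sigma^3,[g])$ it is a critical point of $\mathcal E_b$ restricted to unit-volume metrics in $[g]$, hence has constant scalar curvature --- by hypothesis $R_{g_Y}\equiv 12$ --- and vanishing mean curvature $H_{g_Y}\equiv 0$; combined with umbilicity this forces $L\equiv\tfrac{H}{3}g|_{\Sigma}\equiv 0$, so $\Sigma^3$ is totally geodesic in $(M^4,g_Y)$ and isometric to the round $S^3$, and $R_{g_Y}=12>0$ shows $\mathcal Y(M^4,\Sigma^3,[g])>0$.

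The heart of the argument is then to upgrade ``Bach-flat'' to ``Einstein''. Using the contracted second Bianchi identity $\nabla^{\delta}W_{\delta\alpha\beta\gamma}=C_{\alpha\beta\gamma}$ (valid since $\dim M=4$; $C$ the Cotton tensor) together with the Bach-flat equation \eqref{BF}, one integrates by parts on $M^4$ to obtain an identity of the schematic form
\[
\int_{M^4}\!\Big(|C|^2+\text{(a universal cubic contraction of $W$ and $P$)}\Big)\,dv_{g}
=\oint_{\Sigma^3}\!\big\langle W_{i0j0},\,S^{ij}+(\text{terms in }L)\big\rangle\,d\sigma_{g}.
\]
Because $\Sigma^3$ is $S$-flat ($S\equiv 0$) and totally geodesic ($L\equiv 0$), the right-hand side vanishes identically. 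Combining the resulting interior identity with the Chern--Gauss--Bonnet formula for manifolds with boundary --- whose boundary contribution is explicitly computable and takes its ``model'' value since $\Sigma^3$ is totally geodesic and round --- together with $R_{g_Y}=12$ and the positivity of $\mathcal Y$, one should be reduced to a sign-definite inequality forcing the traceless Ricci tensor (and the Weyl tensor) to vanish: $g_Y$ is Einstein. With $g_Y$ Einstein, $R_{g_Y}=12$, and $\Sigma^3$ totally geodesic and isometric to the round $S^3$, Theorem~A applies verbatim and gives that $(M^4,\Sigma^3,g_Y)$ is isometric to $S^4_{+}$ with the standard metric.

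The main obstacle is the Einstein step. In the \emph{closed} case ``Bach-flat $+$ constant scalar curvature'' is far from implying Einstein (Fubini--Study on $\mathbb{CP}^2$, the product $S^2\times S^2$), and the usual route requires a smallness hypothesis on $\int_{M}\|W\|^2$ to dominate the indefinite cubic term $\int\langle W\cdot W,\mathring{\mathrm{Ric}}\rangle$; here there is no such smallness, and the role of the pinching hypothesis must be played by the three boundary conditions together with the rigidity of the Gauss--Bonnet boundary integrand on a totally geodesic round $S^3$. The delicate points are: (i) to organize the integration by parts so that the \emph{only} surviving boundary integrand is a contraction of $S$ and $L$, so that $S$-flatness and umbilicity genuinely annihilate it --- this is where the Appendix computation behind Theorem~\ref{AppThm} should be reused; and (ii) to extract the correct sign from $R_{g_Y}>0$ and the topology. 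Two alternative packagings of the same content are worth keeping in mind: since $\Sigma^3$ is totally geodesic one may double across it, $S$-flatness being exactly the transmission condition (the reflection symmetry handling the lower-order matchings) that makes the doubled metric a weak --- hence, by ellipticity of the Bach system in a Bianchi gauge, smooth --- Bach-flat metric on the closed manifold $\widehat M=M\cup_{\Sigma}\bar M$ carrying a totally geodesic round $S^3$, reducing the problem to the boundaryless rigidity statement ``a closed Bach-flat four-manifold with $R\equiv 12$ containing a totally geodesic round $S^3$ is $S^4$''; alternatively, one may try to replace the (implicit) Reilly--Obata argument behind Theorem~A by its fourth-order, Paneitz-operator analogue, whose Bochner term involves the Bach tensor and whose natural boundary operator involves $S$, thereby proving the hemisphere rigidity directly at the cost of controlling the cross terms in the fourth-order Bochner identity.
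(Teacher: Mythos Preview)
Your proposal has a genuine gap: the ``Einstein step'' is never carried out. You correctly identify that Bach-flat plus constant scalar curvature does not imply Einstein in the closed case, and you hope the boundary data will substitute for a pinching hypothesis, but you only write a schematic identity and assert that ``one should be reduced to a sign-definite inequality'' without explaining how the indefinite cubic term $\int \langle W\cdot W,\mathring{\mathrm{Ric}}\rangle$ is controlled. None of the three packagings you sketch (integral identity, doubling to a closed Bach-flat rigidity statement, Paneitz--Bochner) is completed; in particular the closed statement ``a Bach-flat four-manifold with $R\equiv 12$ containing a totally geodesic round $S^3$ is $S^4$'' is not in the literature and is essentially equivalent to what you are trying to prove.

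The paper's route is entirely different and bypasses the Einstein step altogether. Working with $g=g_Y$ as you do, the authors first show (Lemma~\ref{Weyl boundary lemma}) that the full Weyl tensor vanishes on $\Sigma^3$; this pins down $R_{0i0j}=h_{ij}$ there. They then compute the Taylor expansion of $g$ in Fermi coordinates $g=dr^2+h_{ij}(x,r)\,dx^i dx^j$ explicitly through fourth order (Lemma~\ref{boundary expansion}), use $S$-flatness to kill $h^{(3)}$ (Lemma~\ref{S equal to h3}), and then prove by induction (Lemma~\ref{expansion m}) that $h_{ij}(x,r)=\cos^2(r)\,h_{ij}(x,0)+O(r^m)$ for every $m$. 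The induction step is the key use of Bach-flatness: the equation is rewritten so that $\nabla_0\nabla_0 R_{0i0j}$ is expressed in terms of quantities involving at most three $r$-derivatives of the metric, which closes the recursion. Finally, they double across the totally geodesic boundary, observe that Bach-flat plus constant scalar curvature is a fourth-order elliptic system in harmonic coordinates so the doubled metric is real analytic, and conclude by unique continuation that the sectional curvature is identically $1$. Thus the argument is ``infinite-order boundary determination plus analyticity'', in direct analogy with Hang--Wang's second-order proof, not a reduction to Theorem~A via an Einstein intermediate.
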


In view of Theorem \ref{AppThm}, we have the following corollary (compare with Theorem B):

\begin{corollary}  \label{MainCor} The upper hemisphere $S^4_{+}$ with the standard metric is the unique critical point of $\mathcal{W} \big|_{\mathcal{M}^{0}(M^4,\Sigma^3)}$ admitting a Yamabe metric $g_Y$ such that the induced metric on the boundary is isometric to the round $S^3$.
\end{corollary}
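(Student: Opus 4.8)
The plan is to use the conformal invariance of the hypotheses to replace $g$ by the Yamabe metric $g_Y$, to show that $g_Y$ is Einstein, and then to invoke Theorem A. To begin: Bach-flatness, $S$-flatness and umbilicity of $\Sigma^3$ are all conformally invariant, so $g_Y$ is Bach-flat with $S$-flat umbilic boundary. As a minimizer of the functional in (\ref{Y1}) over unit-volume metrics in $[g]$, $g_Y$ has constant scalar curvature $R_{g_Y}=12$ and vanishing mean curvature $H_{g_Y}=0$; with umbilicity this forces $L^{g_Y}\equiv 0$, so $\Sigma^3$ is \emph{totally geodesic} for $g_Y$. In particular $R_{g_Y}=12>0$ gives $\mathcal{Y}(M^4,\Sigma^3,[g])>0$, so the conformal Laplacian of $g_Y$, with the Neumann condition appropriate to $H_{g_Y}=0$, is a positive operator, and $\int_M R_{g_Y}^2\,dv_{g_Y}=144\,\mathrm{Vol}(g_Y)$. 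Since $H_{g_Y}=0$ and $L^{g_Y}=0$, the $S$-flatness condition (\ref{S}) reads, along $\Sigma^3$, simply $\nabla^{\alpha}W_{\alpha i 0 j}+\nabla^{\alpha}W_{\alpha j 0 i}=\nabla^{0}W_{0i0j}$.

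The heart of the matter is to show that $(M^4,g_Y)$ is Einstein. For this I would pair the Bach tensor (\ref{BF}) with the trace-free Ricci tensor $E_{g_Y}$ and integrate over $M^4$; the left side vanishes by Bach-flatness. Integrating by parts twice and using the contracted second Bianchi identity (which expresses $\nabla^{\delta}W_{\alpha\gamma\beta\delta}$ through the Cotton tensor) rewrites the bulk integral as a non-negative quadratic form in $\nabla E_{g_Y}$ and $W_{g_Y}$ plus a zeroth-order term. To pin down the sign of that zeroth-order term I would couple this identity with the Chern--Gauss--Bonnet formula for $(M^4,\Sigma^3)$ — which for the totally geodesic boundary carries no boundary contribution (as one sees by doubling across $\Sigma^3$) and reads $8\pi^2\chi(M)=\int_M (a\|W_{g_Y}\|^2-b|E_{g_Y}|^2+c\,R_{g_Y}^2)\,dv_{g_Y}$ for universal constants $a,b,c>0$ — and invoke the positivity of $\mathcal{Y}(M^4,\Sigma^3,[g])$ together with Escobar's sharp Sobolev inequality \cite{Esc92}. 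The boundary integrals produced by the integrations by parts involve exactly the combinations $\nabla^{\alpha}W_{\alpha i0j}$, $\nabla^{0}W_{0i0j}$ and $W_{0i0j}$ from (\ref{S}) and the components of $W_{g_Y}$ tangent to $\Sigma^3$; $S$-flatness annihilates the first group, while the Gauss and Codazzi equations for the totally geodesic boundary express the tangential components of $W_{g_Y}$ through the Schouten tensor of the induced metric. Because $g_Y|_{\Sigma^3}$ is the round $S^3$, of constant curvature $1$ and conformally flat, the boundary contribution becomes a number depending only on $(\Sigma^3,g_Y|_{\Sigma^3})$, hence equal to its value on the model $S^4_{+}$; evaluating the whole identity on $S^4_{+}$, where $W=0$ and $E=0$, shows that this number vanishes. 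The identity then forces the non-negative bulk integral to vanish, so $E_{g_Y}\equiv 0$.

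Thus $(M^4,\Sigma^3,g_Y)$ is compact Einstein with totally geodesic boundary isometric to the round $S^3$ (the value $R_{g_Y}=12$ being, in any case, the one forced on an Einstein metric with such a boundary by the Gauss equation), so Theorem A applies and $(M^4,\Sigma^3,g_Y)$ is isometric to the standard hemisphere $S^4_{+}$; this proves Theorem \ref{main}. Corollary \ref{MainCor} follows at once: by Theorem \ref{AppThm} every critical point of $\mathcal{W}\big|_{\mathcal{M}^{0}(M^4,\Sigma^3)}$ is Bach-flat with $S$-flat umbilic boundary, so one admitting a Yamabe metric whose boundary is the round $S^3$ is, by Theorem \ref{main}, the hemisphere $S^4_{+}$, which in turn is such a critical point.

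I expect the principal obstacle to be the boundary-term accounting in the integral identity of the second step: one must verify that, after commuting covariant derivatives and retaining every term produced by the structure equations of $\Sigma^3$ (including those that formally involve the second fundamental form before it is set to zero), the boundary integrand organizes precisely into the tensor $S_{ij}$ of (\ref{S}) plus intrinsic quantities of $\Sigma^3$, so that $S$-flatness and total geodesy genuinely reduce it to a functional of $g_Y|_{\Sigma^3}$ alone. A related difficulty is extracting a definite sign: neither Chern--Gauss--Bonnet nor the bare Bochner identity is sign-definite in isolation, and the argument must exploit the positivity of the Yamabe invariant — rather than any a priori smallness of $\|W_{g_Y}\|$ — to combine them into an inequality that is saturated only by the model.
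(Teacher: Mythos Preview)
Your deduction of the corollary from Theorem~\ref{AppThm} and Theorem~\ref{main} is fine, and so is your reduction of Theorem~\ref{main} to the Yamabe representative $g_Y$ with $R_{g_Y}=12$ and totally geodesic boundary. The gap is in your proposed proof that $g_Y$ is Einstein.

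The identity you have in mind comes from pairing the formula
\[
B_{\alpha\beta} = -\tfrac{1}{2}\Delta E_{\alpha\beta} - E^{\gamma\delta}W_{\alpha\gamma\beta\delta} + E_{\alpha}^{\gamma}E_{\beta\gamma} - \tfrac{1}{4}|E|^2 g_{\alpha\beta} + \tfrac{1}{6}R\,E_{\alpha\beta}
\]
(valid when $R$ is constant) with $E$ and integrating. After one integration by parts the bulk integrand is $\tfrac{1}{2}|\nabla E|^2 - W_{\alpha\gamma\beta\delta}E^{\alpha\beta}E^{\gamma\delta} + \mathrm{tr}(E^3) + 2|E|^2$, and the boundary term involves $E^{\alpha\beta}\nabla_0 E_{\alpha\beta}$. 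Even granting that $S$-flatness and total geodesy kill the boundary term (which is plausible via Lemma~\ref{S tensor}(3)), the bulk integrand is \emph{not} sign-definite: the cubic terms $-W\!\cdot\!E\!\cdot\!E + \mathrm{tr}(E^3)$ can be arbitrarily negative. Coupling with Chern--Gauss--Bonnet and Yamabe positivity does not rescue this without an a priori smallness or pinching hypothesis on $\int\|W\|^2$ or $\int|E|^2$, which you explicitly disclaim. Concretely, the doubled manifold $(\overline{M},g_d)$ is closed, Bach-flat, with constant positive scalar curvature and positive Yamabe invariant; but $S^3\times S^1$ with its product metric has all of these properties and is not Einstein, so no identity using only those ingredients can force $E\equiv 0$. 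The hypothesis that $g_Y|_{\Sigma^3}$ is the round $S^3$ enters your argument only through the boundary term, and once that term vanishes the hypothesis is no longer doing any work in the interior.

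The paper proceeds entirely differently. It never shows directly that $g_Y$ is Einstein. Instead it uses the boundary hypotheses to pin down the full Taylor expansion of $g_Y$ at $\Sigma^3$: Lemma~\ref{Weyl boundary lemma} forces $W|_{\Sigma^3}=0$, and then (via Lemma~\ref{Weyl umbilic boundary} and Lemma~\ref{S equal to h3}) one gets $h_{ij}(x,r)=\cos^2(r)\,h_{ij}(x,0)+O(r^4)$. Lemma~\ref{expansion m} upgrades this to $O(r^m)$ for every $m$ by an induction whose key step uses Bach-flatness to trade the fourth normal derivative $\nabla_0\nabla_0 R_{0i0j}$ for lower-order terms. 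Finally, Bach-flatness plus constant scalar curvature is a fourth-order elliptic system in harmonic coordinates, so $g_d$ is real analytic, and the infinite-order agreement with the hemisphere along $\Sigma^3$ propagates to constant sectional curvature everywhere. The round-$S^3$ boundary hypothesis is used at every order of the expansion, not merely in a single boundary integral; this is how it drives the interior geometry.
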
 

As with the Hang-Yang result, Theorem \ref{main} can be viewed as a uniqueness result (modulo conformal transformations) for an overdetermined boundary value problem.   In this case, the Bach-flat condition is fourth order in the metric, so it is natural to impose two boundary conditions on the metric $g$; i.e., $S$-flatness and umbilic, which are third order and fist order respectively in the metric.  The additional assumption on the Yamabe metric is  a kind of ``conformally invariant Dirichlet condition'', and makes the problem overdetermined.

At first glance it may seem that the assumption on the Yamabe metric is too strong, and it would be more natural to just assume that the metric $g$ when restricted to the boundary is conformal to the round $S^3$.  However, by the work of Schoen-Yau \cite{SchoenYau} one can construct exaples of manifolds satisfying this weaker condition that are not even diffeomorphic to the upper hemisphere:

\begin{theorem}  \label{SYthm}  (See \cite{SchoenYau}) The manifold with boundary $(S^3 \times S^1 \setminus B^4, S^3)$, where $B^4$ is a four-dimensional ball, admits a metric $\widetilde{g}$ with the following properties:   \begin{enumerate}  \smallskip

\item $\widetilde{g}$ is locally conformally flat, hence Bach-flat and $S$-flat;   \smallskip

\item The boundary $S^3$ is umbilic with respect to $\widetilde{g}$;  \smallskip

\item The induced metric $\widetilde{g} \vert_{S^3}$ is conformal to the round metric $h_0$ on $S^3$.

\end{enumerate}

 \end{theorem}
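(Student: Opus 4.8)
\emph{Proof proposal.} The plan is to write down $\widetilde{g}$ explicitly, realizing $(S^3\times S^1)\setminus B^4$ as a cyclic conformal quotient of a punctured domain in $\mathbb{R}^4$ with a round ball removed; this is just the manifold-with-boundary version of the classical loxodromic (Kleinian) description of $S^3\times S^1$ as a locally conformally flat space, which is the content we attribute to Schoen--Yau.

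First I would record the flat model. Writing $x\in\mathbb{R}^4\setminus\{0\}$ as $x=e^{t}\omega$ with $(t,\omega)\in\mathbb{R}\times S^3$, a direct computation gives $|dx|^2=e^{2t}\big(dt^2+g_{S^3}\big)$, so the product (cylinder) metric $dt^2+g_{S^3}$ on $S^3\times\mathbb{R}$ equals $|x|^{-2}\delta$, where $\delta$ is the Euclidean metric; in particular it is conformally flat. Fix $L>0$ and let $\Gamma=\langle\gamma\rangle$ with $\gamma(x)=e^{L}x$: then $\Gamma$ acts freely and properly discontinuously by conformal transformations on $\mathbb{R}^4\setminus\{0\}$, and $(\mathbb{R}^4\setminus\{0\})/\Gamma$ is diffeomorphic to $S^3\times S^1$ (the circle factor having length $L$), carrying the locally conformally flat metric $\bar g$ induced from $|x|^{-2}\delta$. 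Now choose $p\in\mathbb{R}^4$ with $|p|=1$ and $r\in\big(0,\tanh(L/2)\big)$. Since $\gamma^{k}\big(B(p,r)\big)=B(e^{kL}p,e^{kL}r)$ and $\tanh(L/2)=(e^{L}-1)/(e^{L}+1)$, the bound $r<\tanh(L/2)$ forces $\gamma^{k}\big(B(p,r)\big)\cap B(p,r)=\emptyset$ for every $k\neq 0$, and also $0\notin B(p,r)$; hence $B:=B(p,r)$ projects to an embedded open ball $\pi(B)$ in the quotient. Setting $M:=\big((\mathbb{R}^4\setminus\{0\})/\Gamma\big)\setminus\pi(B)$ and $\widetilde g:=\bar g|_{M}$, the space $M$ is a compact manifold diffeomorphic to $(S^3\times S^1)\setminus B^4$ with boundary $\Sigma=\pi(\partial B)\cong S^3$.

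It then remains to verify the three properties. (1) $\widetilde g$ is locally isometric to $|x|^{-2}\delta$, which is conformal to the flat metric, so $\widetilde g$ is locally conformally flat; in dimension four this forces $W_{\widetilde g}\equiv 0$, hence $B_{\alpha\beta}\equiv 0$ by (\ref{BF}), and every term in (\ref{S}) vanishes as well, so $\Sigma$ is $S$-flat. (2) The round sphere $\partial B(p,r)$ is totally umbilic for $\delta$, and vanishing of the trace-free second fundamental form of a hypersurface is a conformally invariant condition, so $\Sigma$ is umbilic for $\widetilde g$. (3) The Euclidean first fundamental form of $\partial B(p,r)$ is the round metric of radius $r$ on $S^3$, so $\widetilde g|_{\Sigma}=(|x|^{-2}\delta)|_{\partial B}$ is a positive smooth multiple of a round metric, hence conformal to $h_0$.

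The only point that requires care is the embedding step: the radius must be taken strictly below the threshold $\tanh(L/2)$, since otherwise the consecutive translates $\gamma^{\pm1}(B)$ overlap $B$ and the round ball merely immerses, rather than embeds, in the quotient; granting that, the rest is a routine verification. One could equivalently phrase the construction intrinsically via the developing map of a locally conformally flat structure, but the concrete description above suffices.
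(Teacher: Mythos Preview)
Your proof is correct but proceeds by a genuinely different route from the paper. The paper does not write down an explicit metric; instead it invokes the Schoen--Yau connected-sum theorem: forming $S^4 \,\sharp\, (S^3\times S^1)\approx S^3\times S^1$ by gluing near the north pole of the round $S^4$ yields a locally conformally flat metric $\widetilde g$ which remains conformal to the round metric away from a small neighborhood of the gluing point, in particular on the entire lower hemisphere. Deleting that hemisphere (a copy of $B^4$) leaves the equatorial $S^3$ as boundary; it is umbilic because it was totally geodesic for the round metric and umbilicity is conformally invariant, and the induced metric is visibly conformal to the round one.

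Your argument is more explicit and more elementary: you realize $S^3\times S^1$ directly as the loxodromic quotient $(\mathbb{R}^4\setminus\{0\})/\langle x\mapsto e^{L}x\rangle$ with the cylinder metric $|x|^{-2}\delta$, and excise a small Euclidean round ball whose $\Gamma$-translates are disjoint. This bypasses the Schoen--Yau surgery theorem entirely and gives $\widetilde g$ in closed form. The paper's approach, by contrast, makes the extension to $k\,\sharp\,(S^3\times S^1)$ (noted immediately after the theorem) automatic by iterating the connected sum, whereas in your picture one would have to pass to Schottky-type groups to get the same generalization. One small quibble: the loxodromic model of $S^3\times S^1$ is classical and predates \cite{SchoenYau}; what Schoen--Yau actually contribute is the surgery result the paper invokes, so the attribution in your opening sentence is slightly misplaced.
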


In fact, for any $k \geq 1$ the Schoen-Yau construction implies the existence of a metric $\widetilde{g}_k$ on $ k \sharp (S^3 \times S^1)$ with the same properties.  In view of Theorem \ref{SYthm}, one needs a stronger condition on the induced metric in order to distinguish the upper hemisphere among Bach-flat and $S$-flat manifolds with umbilic boundary.

To conclude the introduction we point out that four-dimensional Bach-flat manifolds with umbilic boundary arise naturally in the context of theory of conformally compact Einstein (CCE) manifolds.  CCE manifolds are central to the Fefferman-Graham theory of conformal invariants, and appear in the physics literature in the AdS/CFT correspondence.  Here, we give a very brief explanation of the connection to our work, and refer the reader to \cite{FG} for more details.

Suppose $X$ is the interior of a smooth, compact manifold with boundary $( \bar{X}, N = \partial X)$.  A metric $g_{+}$ defined in $X$ is {\em conformally compact} if there is a defining function for the boundary $\rho : \bar{X} \rightarrow \mathbb{R}$ such that $\bar{g} = \rho^2 g_{+}$ defines a metric on $\bar{X}$.  By a defining function, we mean a smooth function with $\rho > 0$ in $X$, $\rho = 0$ and $d \rho \neq 0$ on $\partial X$.  We will assume in the following that $\bar{g}$ is at least $C^2$ up to the boundary.  If $(X, \partial X, g_{+})$ is Einstein, then we say that $(X,\partial X, g_{+})$ is a conformally compact Einstein (CCE) manifold.

The choice of defining function is not unique, and thus a conformally compact manifold $(X, N = \partial X , g_{+})$ naturally defines a conformal class of metrics on the boundary, $[h]$, called the {\em conformal infinity}.  Given a metric $h$ in the conformal infinity there is a canonical choice of defining function, called a {\em special} or {\em geodesic} defining function $r$, such that near the boundary $\bar{g} = r^2 g_{+}$ can be written as
\begin{align} \label{FG1} 
    \bar{g} = dr^2 + h_r
\end{align} 
where $h_r$ is a one-parameter family of metrics on $N$.  Moreover, the boundary $N$ is totally geodesic with respect to $\bar{g}$.   

Now suppose $(X^4, N^3 = \partial X^4, g_{+})$ is a four-dimensional CCE manifold.  Given $h$ in the conformal infinity, let $\bar{g} = r^2 g_{+}$ be the compactification by the special defining function assoicated to $h$.  Since $g_{+}$ is Einstein, it is Bach-flat.  By conformal invariance of the Bach-flat condition $\bar{g}$ is also Bach-flat.  As we observed above,  $N^3$ is totally geodesic (hence umbilic) with respect to $\bar{g}$.  Moreover, the metric $h_r$ in (\ref{FG1}) can be expanded near $N^3$ to give
\begin{align} \label{FGE}
    \bar{g} = dr^2 + h + g^{(2)}r^2 + g^{(3)}r^3 + O(r^4),
\end{align}
where $g^{(2)}$ and $g^{(3)}$ are tensors on $N^3$.  As shown in \cite{FG}, $g^{(2)}$ is determined by the metric $h$, but $g^{(3)}$ is formally undetermined.  In \cite{CG18}, Chang-Ge showed that
\begin{align*}
    S_{\bar{g}}= -\frac{3}{2}g^{(3)}.
\end{align*}
To summarize:  Four-dimensional CCE manifolds provide many examples of Bach-flat manifolds with umbilic boundary.  Moreover, the vanishing of the $S$-tensor has a concrete interpretation via the Fefferman-Graham expansion (\ref{FGE}).  We remark that the vanishing of $S$ can be used in some cases to characterize the geometry; see \cite{LQS17}.

\bigskip

\section{Preliminaries} \label{prelim}

\subsection{Basic notations and properties for manifolds with boundary}
Suppose $(M^n,\Sigma^{n-1}, g)$ is a Riemannian manifold with boundary $(\Sigma^{n-1},h)$, where $h = g|_{\Sigma}$ is the induced metric.
Throughout this note, we denote the Riemannian curvature tensor by $Rm$ (or $Rm_g$ if we need to specify the metric), the Ricci tensor by $Ric$, and the scalar curvature by $R$.  We also denote the Weyl curvature tensor by $W$, and the Schouten tensor
\begin{align} \label{Pdef}
P = \frac{1}{n-2} \left( Ric - \frac{1}{2(n-1)}R \cdot g \right).
\end{align}
In terms of the Weyl and Schouten tensors the Riemannian curvature tensor can be decomposed as
\begin{align} \label{rot}
Rm = W + P \bcw g
\end{align}
where $\bcw$ is the Kulkarni-Nomizu product. We use $Rm^{\Sigma}$, $W^{\Sigma}$, $Ric^{\Sigma}$, $P^{\Sigma}$, and $R^{\Sigma}$ to denote the respective curvature tensors calculated with respect to the intrinsic metric $h$ on $\Sigma^{n-1}$.

The boundary is called \emph{umbilic} if
\begin{align}
    L_{ij} = \lambda{h_{ij}},
\end{align}
where $\lambda$ is a smooth function on $\Sigma^{n-1}$ and $L_{ij}$ is the second fundamental form of $\Sigma^{n-1}$. In other words, the boundary is umbilic if its second fundamental form is pointwise proportional to the metric. By taking trace, we obtain that $\lambda = \frac{H}{n-1}$, where $H$ is the mean curvature of $\Sigma^{n-1}$. The boundary is called \emph{minimal} if its mean curvature is vanishing, i.e., $H = 0$. The boundary is called \emph{totally geodesic} if its second fundamental form is vanishing, which is equivalent to the fact that the boundary is minimal and umbilic.
Note that the umbilic condition is conformally invariant: if $(\Sigma^{n-1},h)$ is umbilic with respect to the metric $g$ and $\widetilde{g} = u^2g$ is a metric conformal to $g$, then $(\Sigma^{n-1},\widetilde{h})$ is also umbilic with respect to the metric $\widetilde{g}$.

The \emph{first Yamabe invariant} of $(M^n,\Sigma^{n-1}, g)$ is defined as
\begin{align}
    \mathcal{Y}(M^n,\Sigma^{n-1},[g]) = \inf_{\widetilde{g}\in[g]}Vol(\tilde{g})^{-\tfrac{n-2}{n}} \left(\int_MR_{\widetilde{g}}\,\, dv_{\widetilde{g}}+2\int_{\Sigma}H_{\widetilde{g}}\,\, d\sigma_{\widetilde{g}}\right)
\end{align}
Any smooth metric achieving this infimum has constant scalar curvature and minimal boundary. From the work of Escobar \cite{Esc92}, it is known that in many cases such a minimizer exists. In particular, for $3\leq{n}\leq{5}$, a minimizer always exists.  In this note, we shall call the minimizing metric scaled to have constant scalar curvature $n(n-1)$ and minimal boundary a \emph{Yamabe metric} in its conformal class. In addition, Escobar established the following inequality for $3\leq{n}\leq{5}$:
\begin{align}
    \mathcal{Y}(M^n,\Sigma^{n-1},[g])\leq\mathcal{Y}(S^n_+,S^{n-1},[g_{S^n_{+}}]),
\end{align}
where equality holds if and only if $(M^n,\Sigma^{n-1},g)$ is conformally equivalent to the round upper hemisphere $(S_+^n,S^{n-1},g_{S^n_{+}})$. Note that for a manifold with umbilic boundary, the Yamabe metric has constant scalar curvature and totally geodesic boundary.

\subsection{The Weyl functional on four-manifolds with boundary}
On a closed smooth four-manifold, the Weyl functional is defined as
\begin{align}
       \mathcal{W}:\,\,\, g \,\,\, \to \,\,\, \int_{M^4}||W_{g}||^2\, dv_g.
    \end{align}
It has played an important role in the study of the geometry and topology of the underlying manifold. On a smooth four-manifold with boundary$(M^4, \Sigma^{3})$, the Weyl functional is defined as
\begin{align}
       \mathcal{W}_b:\,\,\, g \,\,\, \to \,\,\, \int_{M^4}||W_{g}||^2\, dv_g + 2\oint_{\Sigma^3}W_{i0j0}L^{ij}\, d{\sigma}_g,
\end{align}
where $L_{ij}$ and $H$ are the second fundamental form and mean curvature of $\Sigma^3$, respectively, Latin letters run through $1,2,3$ as tangential directions, and $0$ is the outward normal direction on $\Sigma$. The functional $\mathcal{W}_b$ is conformally invariant in four dimensions in the sense that $\mathcal{W}_b(\widetilde{g}) = \mathcal{W}_b(g)$ for any $\widetilde{g}\in[g]$. Indeed, $||W_{g}||^2\, dv_g$ and $W_{i0j0}L^{ij}\, d{\sigma}_g$ are pointwise conformally invariant differential forms in $M^4$ and on $\Sigma^3$, respectively. Also note that for umbilic boundary, $W_{i0j0}L^{ij} \equiv 0$ on $\Sigma^3$ since Weyl curvature is trace-free. It follows that $\mathcal{W}_b$ coincides with $\mathcal{W}$ on four-manifolds with umbilic boundary.

As mentioned in the Introduction (a proof will be given in the Appendix), critical points of $\mathcal{W}_{b}$ are Bach-flat metrics in $M^4$ with vanishing $S$-tensor on $\Sigma^3$. The basic conformal properties of the Bach tensor and the $S$-tensor are given in the following lemma:

\begin{lemma}[\cite{CGY02}\cite{CG18}\cite{Der83}]\label{S tensor}
The Bach tensor $B_{\alpha\beta}$ and $S$-tensor $S_{ij}$ on $(M^4,\Sigma^3,g)$ have the following properties:
\begin{enumerate}
    \item $B_{\alpha\beta}$ is symmetric, trace-free, divergence-free and conformally invariant in the sense that for $\widetilde{g} = e^{2w}g$,
           \[B_{\widetilde{g}} = e^{-2w}B_g.\]
    \item $S_{ij}$ is symmetric, trace-free and conformally invariant in the sense that for $\widetilde{g} = e^{2w}g$,
           \[S_{\widetilde{g}} = e^{-w}S_g.\]
    \item If $\Sigma^3$ is totally geodesic, then
    \[S_{ij} = \nabla^{0}P_{ij}.\]
\end{enumerate}
\end{lemma}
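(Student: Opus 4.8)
The plan is to verify the three items in turn, relying on the cited references for the parts that are standard and only sketching the computations.

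\emph{Item (1): properties of the Bach tensor.} That $B_{\alpha\beta}$ is symmetric is immediate from \eqref{BF}, since $W_{\alpha\gamma\beta\delta}$ is symmetric under the pair interchange $(\alpha\gamma)\leftrightarrow(\beta\delta)$ and $P$ is symmetric. Trace-freeness follows by contracting \eqref{BF} in $\alpha,\beta$ and using the trace-freeness of the Weyl tensor ($g^{\alpha\beta}W_{\alpha\gamma\beta\delta}=0$) together with the second Bianchi identity, which controls the term $g^{\alpha\beta}\nabla^{\gamma}\nabla^{\delta}W_{\alpha\gamma\beta\delta}$ after commuting covariant derivatives. The divergence-free property $\nabla^{\alpha}B_{\alpha\beta}=0$ in dimension four is the classical computation of Bach: one writes the second Bianchi identity in the form $\nabla^{\gamma}W_{\alpha\gamma\beta\delta} = (n-3)(\nabla_\beta P_{\alpha\delta}-\nabla_\delta P_{\alpha\beta})$ (the Cotton tensor), differentiates again, and commutes derivatives; the obstruction terms cancel precisely when $n=4$. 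Finally, conformal invariance $B_{\widetilde g}=e^{-2w}B_g$ under $\widetilde g=e^{2w}g$ is again dimension-four specific and is proven in \cite{Der83}; the clean way is to note that $\|W_g\|^2\,dv_g$ is pointwise conformally invariant in four dimensions, so $B$ arises as the Euler--Lagrange operator of a conformally invariant functional, forcing the stated weight. We cite \cite{Der83}, \cite{CGY02} for the details.

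\emph{Item (2): properties of the $S$-tensor.} Symmetry of $S_{ij}$ in \eqref{S} requires a brief check: the combination $\nabla^{\alpha}W_{\alpha i0j}+\nabla^{\alpha}W_{\alpha j0i}$ is manifestly symmetric in $i,j$; the term $\nabla^{0}W_{0i0j}$ is symmetric because $W_{0i0j}=W_{0j0i}$ by the pair-symmetry of $W$; and $HW_{0i0j}$ is likewise symmetric. For trace-freeness, contract \eqref{S} with $h^{ij}$: the first two terms give (after relabeling) $2h^{ij}\nabla^{\alpha}W_{\alpha i0j}$, which one expands using $g^{\alpha\beta}=h^{\alpha\beta}+N^\alpha N^\beta$ on $\Sigma$ together with $g^{ij}W_{\alpha i0j}$ being a contraction of Weyl that reduces to normal-normal components; the term $h^{ij}\nabla^{0}W_{0i0j}$ similarly reduces to a trace of $W$ in the tangential slot, which vanishes; and $h^{ij}W_{0i0j}=0$ directly by trace-freeness of $W$. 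Assembling these shows $h^{ij}S_{ij}=0$. The conformal transformation law $S_{\widetilde g}=e^{-w}S_g$ is the content of \cite{CG18}: $S$ is the boundary Euler--Lagrange term associated with the conformally invariant functional $\mathcal{W}_b$ (the boundary integrand $W_{i0j0}L^{ij}\,d\sigma_g$ being a pointwise conformal invariant on $\Sigma^3$), and the weight $e^{-w}$ is dictated by the conformal scaling of the boundary measure $d\sigma_g$ versus $dv_g$. We refer to \cite{CG18} for the explicit computation.

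\emph{Item (3): the totally geodesic case.} Assume $\Sigma^3$ is totally geodesic, so $L_{ij}=0$ and $H=0$. Then \eqref{S} collapses to $S_{ij}=\nabla^{\alpha}W_{\alpha i0j}+\nabla^{\alpha}W_{\alpha j0i}-\nabla^{0}W_{0i0j}$. The key identity is the contracted second Bianchi identity expressing the divergence of Weyl through the Cotton tensor: in dimension four, $\nabla^{\alpha}W_{\alpha\beta\gamma\delta}=\nabla_{\gamma}P_{\beta\delta}-\nabla_{\delta}P_{\beta\gamma}$. Applying this with $(\beta,\gamma,\delta)=(i,0,j)$ gives $\nabla^{\alpha}W_{\alpha i0j}=\nabla_{0}P_{ij}-\nabla_{j}P_{i0}$, and symmetrizing in $i,j$, $\nabla^{\alpha}W_{\alpha i0j}+\nabla^{\alpha}W_{\alpha j0i}=2\nabla_{0}P_{ij}-\nabla_{j}P_{i0}-\nabla_{i}P_{j0}$. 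For the remaining term, apply the same Bianchi identity with $(\beta,\gamma,\delta)=(0,0,j)$ contracted appropriately, or directly with the roles arranged so that $\nabla^{0}W_{0i0j}$ is matched; one finds $\nabla^{0}W_{0i0j}=\nabla_{0}P_{ij}-\nabla_{j}P_{0i}$ modulo terms that vanish because, for a totally geodesic boundary, the tangential derivative of the normal-tangential Schouten components is controlled by $L$ (which is zero). Subtracting, all terms except one cancel and we are left with $S_{ij}=\nabla_{0}P_{ij}=\nabla^{0}P_{ij}$. The bookkeeping of which Christoffel/second-fundamental-form terms appear when one raises the normal index and switches between ambient and boundary covariant derivatives is the one place that needs genuine care; it is exactly here that the totally geodesic hypothesis is used, and this is the main (though still routine) obstacle. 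This matches the formula of \cite{CG18}.
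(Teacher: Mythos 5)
The paper does not actually prove this lemma: items (1)--(3) are quoted from \cite{CGY02}, \cite{CG18}, \cite{Der83}, so your citation-level treatment of (1) and (2) is consistent with what the paper does, and your route for (3) — the four-dimensional identity $\nabla^{\alpha}W_{\alpha\beta\gamma\delta}=\nabla_{\gamma}P_{\beta\delta}-\nabla_{\delta}P_{\beta\gamma}$ plus splitting off the normal part of the divergence — is the same mechanism the paper itself uses later, in its proof of Lemma \ref{S equal to h3}, where it records $\nabla^{i}P_{0j}=\nabla^{k}W_{ki0j}=0$ on a totally geodesic boundary and deduces $\nabla^{0}P_{ij}=\nabla^{\alpha}W_{\alpha i0j}=\nabla^{0}W_{0i0j}$.

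The one place where your argument for (3) is not yet a proof is exactly the step you flag as needing ``genuine care,'' and the justification you offer there is not the right one. The Bianchi identity only controls the full divergence $\nabla^{\alpha}W_{\alpha i0j}$; to isolate $\nabla^{0}W_{0i0j}$ you must write $\nabla^{0}W_{0i0j}=\nabla^{\alpha}W_{\alpha i0j}-\nabla^{k}W_{ki0j}$ and prove that the tangential divergence $\nabla^{k}W_{ki0j}$ vanishes on $\Sigma^3$, and after the subtraction you are left with $S_{ij}=\nabla_{0}P_{ij}-\nabla_{i}P_{j0}$, so you also need $\nabla_{i}P_{j0}=0$ on $\Sigma^3$. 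Neither of these follows merely because ``the tangential derivative of the normal-tangential Schouten components is controlled by $L$'': the point is that the tensors being differentiated vanish identically along the boundary — $W_{ki0j}\vert_{\Sigma}=0$ by Lemma \ref{Weyl umbilic boundary} (a consequence of the Codazzi equation for umbilic boundary), and $P_{j0}\vert_{\Sigma}=\tfrac12 R_{j0}\vert_{\Sigma}=0$ by (\ref{P_i0}) with $H=0$ — and that with $L=0$ the Christoffel corrections in (\ref{Ch1}) disappear, so the ambient tangential covariant derivative of these identically vanishing restrictions is zero. Without invoking the Codazzi input (the pointwise vanishing of $W_{ki0j}$ and $P_{j0}$ along $\Sigma$), the cancellation you assert does not close; with it, your computation gives $S_{ij}=\nabla^{0}P_{ij}$ as claimed, independently of the sign convention chosen for the Cotton identity, since both residual terms vanish.
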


\section{Weyl curvature on umbilic boundary}
In this section, we list and prove several useful properties of the Weyl curvature tensor on umbilic boundary.  

\begin{lemma}\label{Weyl umbilic boundary}
Suppose $(M^4,\Sigma^3,g)$ has umbilic boundary. Then on $\Sigma^3$
\begin{align}
     W_{0i0j} & = P_{ij} - P^{\Sigma}_{ij} + \frac{1}{18}H^2g_{ij}, \\
     W_{ijk0} & = 0, \\
     \sum_{i,j,k,l=1}^{3}|W_{ijkl}|^2 & = 4\sum_{i,j = 1}^3|P_{ij} - P^{\Sigma}_{ij} + \frac{1}{18}H^2g_{ij}|^2.
\end{align}
In particular, $W = 0$ on $\Sigma^3$ if and only if $W_{0i0j} = 0$ on $\Sigma^3$.
\end{lemma}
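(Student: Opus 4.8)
\emph{Proposal.} The plan is to fix a point $p\in\Sigma^3$ and work in a local orthonormal frame $\{e_0,e_1,e_2,e_3\}$ adapted to the boundary, with $e_0$ the outward unit normal and $e_1,e_2,e_3$ tangent to $\Sigma^3$; Latin indices are tangential and $0$ is normal. The umbilic hypothesis reads $L_{ij}=\lambda h_{ij}$ with $\lambda=H/3$. All three identities will be extracted from the curvature decomposition $Rm=W+P\bcw g$ in (\ref{rot}) by restricting the free indices to be tangential and/or normal, and substituting the Gauss and Codazzi equations together with the umbilic relation. For the identity $W_{ijk0}=0$: taking the $(i,j,k,0)$ component of (\ref{rot}) and using $g_{i0}=g_{j0}=0$ gives $W_{ijk0}=R_{ijk0}-P_{j0}g_{ik}+P_{i0}g_{jk}$. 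The Codazzi equation with $L_{ij}=\lambda h_{ij}$ shows $R_{ijk0}=(\nabla^{\Sigma}_i\lambda)h_{jk}-(\nabla^{\Sigma}_j\lambda)h_{ik}$; contracting Codazzi and using the umbilic relation again shows $P_{i0}=\tfrac12 Ric_{i0}$ is itself a multiple of $\nabla^{\Sigma}_i\lambda$ (concretely $P_{i0}=-\nabla^{\Sigma}_i\lambda$ with our conventions). Substituting these, all terms cancel and $W_{ijk0}=0$.

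For the formula for $W_{0i0j}$: since $W$ is totally trace-free, $W_{0i0j}=-\sum_{k=1}^3 W_{kikj}$, so it suffices to compute the all-tangential components. Taking the tangential part of (\ref{rot}) and applying the Gauss equation with $L_{ij}=\lambda h_{ij}$ expresses $\sum_k R_{kikj}$ in terms of $Ric^{\Sigma}_{ij}$ and $\lambda^2 h_{ij}$, while $\sum_k(P\bcw g)_{kikj}$ is a combination of $P_{ij}$, $g_{ij}$ and $\mathrm{tr}_{\Sigma}P:=\sum_k P_{kk}$. One then eliminates $\mathrm{tr}_{\Sigma}P$ using $\mathrm{tr}_{\Sigma}P=\tfrac{R}{6}-P_{00}$ together with the contracted Gauss equation $R^{\Sigma}=R-2Ric_{00}+H^2-|L|^2$, where $|L|^2=3\lambda^2$ and $H^2=9\lambda^2$ by umbilicity. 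Collecting the terms and using that the Schouten tensor of a $3$-manifold is $P^{\Sigma}=Ric^{\Sigma}-\tfrac14 R^{\Sigma}h$ yields exactly $W_{0i0j}=P_{ij}-P^{\Sigma}_{ij}+\tfrac{1}{18}H^2 g_{ij}$.

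For the norm identity and the ``in particular'' clause, write $T_{ij}:=P_{ij}-P^{\Sigma}_{ij}+\tfrac1{18}H^2 g_{ij}$, so that $W_{0i0j}=T_{ij}$ by the previous step and $\mathrm{tr}_{\Sigma}T=\sum_i W_{0i0i}=0$, again because $W$ is trace-free. Since $\Sigma^3$ is three-dimensional its Weyl tensor vanishes, hence $Rm^{\Sigma}=P^{\Sigma}\bcw h$; feeding this and the umbilic Gauss equation into the tangential part of (\ref{rot}) gives $W_{ijkl}=-(T\bcw h)_{ijkl}$. A direct expansion of the Kulkarni--Nomizu norm shows that for a symmetric $2$-tensor $A$ on an $m$-manifold $\sum|(A\bcw h)_{ijkl}|^2=4(m-2)|A|^2+4(\mathrm{tr}\,A)^2$; taking $m=3$, $A=T$ and using $\mathrm{tr}_{\Sigma}T=0$ gives $\sum_{i,j,k,l}|W_{ijkl}|^2=4\sum_{i,j}|T_{ij}|^2$, which is the third identity. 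Finally, on $\Sigma^3$ the tensor $W$ is assembled only from the blocks $W_{ijkl}$, $W_{ijk0}$ (and its index permutations) and $W_{0i0j}$; the middle block vanishes by the first identity, and the third identity shows $\sum|W_{ijkl}|^2$ is a fixed positive multiple of $\sum|W_{0i0j}|^2$, so $W\equiv 0$ on $\Sigma^3$ if and only if $W_{0i0j}\equiv 0$ on $\Sigma^3$.

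I expect the main obstacle to be purely bookkeeping rather than conceptual: one must pin down a single mutually compatible set of sign conventions for the second fundamental form and mean curvature, the Gauss and Codazzi equations, the Ricci contraction, and the Kulkarni--Nomizu product, and then carry the $\lambda$-terms through the various contractions without error so that the coefficient $\tfrac1{18}$ (equivalently $\tfrac12\lambda^2$) emerges correctly and $\mathrm{tr}_\Sigma T$ really is zero. The identity $\sum|(A\bcw h)_{ijkl}|^2=4(m-2)|A|^2+4(\mathrm{tr}\,A)^2$ and the contracted Gauss relation are the two auxiliary computations that need to be done carefully; everything else is a direct substitution.
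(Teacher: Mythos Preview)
Your proposal is correct and follows essentially the same route as the paper: Gauss and Codazzi equations plus the decomposition $Rm=W+P\bcw g$, together with the vanishing of the three-dimensional Weyl tensor, yield the three identities, and the paper's intermediate equation $W_{ikjl}+(A\bcw g)_{ikjl}=0$ with $A_{ij}=W_{0i0j}$ is exactly your $W_{ijkl}=-(T\bcw h)_{ijkl}$. The only cosmetic differences are that the paper reads off $W_{0i0j}$ directly from the $R_{0i0j}$ component (rather than via $W_{0i0j}=-\sum_k W_{kikj}$), and it squares $W_{ikjl}=-(A\bcw g)_{ikjl}$ by hand instead of invoking the general identity $|A\bcw h|^2=4(m-2)|A|^2+4(\mathrm{tr}\,A)^2$; your packaging is arguably cleaner but the content is identical.
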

\begin{proof}
Recall that $(\Sigma^3,h)$ being umbilic means that
\begin{align}\label{umbilic}
    L_{ij} = \frac{1}{3}Hg_{ij}.
\end{align}

With (\ref{umbilic}), the Gauss equations imply on $\Sigma^3$ that
\begin{align}\label{Gauss}
    R_{ikjl} = R^{\Sigma}_{ikjl} - L_{ij}L_{kl} + L_{il}L_{jk} = R^{\Sigma}_{ikjl} - \frac{1}{9}H^2g_{ij}g_{kl} + \frac{1}{9}H^2g_{il}g_{jk}
\end{align}
Taking the trace, we have on $\Sigma^3$ that 
\begin{align}\label{Gauss 1st trace}
    R_{ij} - R_{0i0j} = R^{\Sigma}_{ij}  - \frac{2}{9}H^2g_{ij}.
\end{align}
Taking the trace once more, we have on $\Sigma^3$ that 
\begin{align}\label{Gauss 2nd trace}
R - 2R_{00} = R^{\Sigma} -\frac{2}{3}H^2
\end{align}
The decomposition of curvature implies on $\Sigma^3$ that
\begin{align}\label{R_ijkl on boundary}
R_{0i0j} = W_{0i0j} + g_{00}P_{ij} - g_{ij}P_{00}.
\end{align}
By the definition of Schouten tensor, we have
\begin{align}\label{Schouten}
P_{00} = \frac{1}{2}\left(R_{00} - \frac{1}{6}Rg_{00}\right),\,\,\, P_{ij} = \frac{1}{2}\left(R_{ij} - \frac{1}{6}Rg_{ij}\right),\,\,\, P^{\Sigma}_{ij} = R^{\Sigma}_{ij} - \frac{1}{4}R^{\Sigma}_{ij}.
\end{align}
If we substitute (\ref{R_ijkl on boundary}) and (\ref{Schouten}) into (\ref{Gauss 1st trace}), then 
\begin{align}
    2P_{ij} +\frac{1}{6}Rg_{ij} - W_{0i0j} - P_{ij} - P_{00}g_{ij} = P^{\Sigma}_{ij} +\frac{1}{4}R^{\Sigma}g_{ij} -\frac{2}{9}H^2g_{ij},
\end{align}
which implies
\begin{align}\label{Weyl on the boundary}
    W_{0i0j} = P_{ij} - P^{\Sigma}_{ij} + \left(\frac{1}{6}R-\frac{1}{4}R^{\Sigma}-P_{00} + \frac{2}{9}H^2\right)g_{ij}.
\end{align}
Also, substuting (\ref{Schouten}) into (\ref{Gauss 2nd trace}) gives: 
\begin{align}\label{Gauss 2nd trace Schouten}
    P_{00} = \frac{1}{6}R - \frac{1}{4}R^{\Sigma} +\frac{1}{6}H^2.
\end{align}
Finally, substituting (\ref{Gauss 2nd trace Schouten}) into (\ref{Weyl on the boundary}) we get 
\begin{align}
    W_{0i0j} = P_{ij} - P^{\Sigma}_{ij} + \frac{1}{18}H^2g_{ij}.
\end{align}

By (\ref{umbilic}), the Codazzi equations imply
\begin{align}\label{Codazzi}
    R_{ijk0} = -\nabla^{\Sigma}_{j}L_{ik} + \nabla^{\Sigma}_{i}L_{jk} = -\frac{1}{3}\nabla^{\Sigma}_{j}Hg_{ik} + \frac{1}{3}\nabla^{\Sigma}_{i}Hg_{jk}
\end{align}
Taking the trace, we have on $\Sigma^3$ that
\begin{align}\label{R_j0 on the boundary}
    R_{j0} = -\frac{2}{3}\nabla^{\Sigma}_{j}H.
\end{align}
The decomposition of curvature implies on $\Sigma^3$ that
\begin{align}\label{R_ijk0 on boundary}
R_{ijk0} = W_{ijk0} + g_{ik}P_{j0} - g_{jk}P_{i0}
\end{align}
By definition, we have from (\ref{R_j0 on the boundary}) on $\Sigma^3$ that
\begin{align}\label{P_i0}
    P_{i0} = \frac{1}{2}\left(R_{i0} - \frac{1}{6}Rg_{i0}\right) = \frac{1}{2}R_{i0} = -\frac{1}{3}\nabla^{\Sigma}_{i}H.
\end{align}
Combining  (\ref{Codazzi}),(\ref{R_ijk0 on boundary}), and (\ref{P_i0}), we have on $\Sigma^3$
\begin{align}
    -\frac{1}{3}\nabla^{\Sigma}_{j}Hg_{ik} + \frac{1}{3}\nabla^{\Sigma}_{i}Hg_{jk} = W_{ijk0} - \frac{1}{3}\nabla^{\Sigma}_{j}Hg_{ik} + \frac{1}{3}\nabla^{\Sigma}_{i}Hg_{jk},
\end{align}
which implies $W_{ijk0} = 0$ on $\Sigma^3$.

Next, we write the Gauss equation (\ref{Gauss}) using the decomposition of $Rm$ into $W$, $P$ and $R$. Recall
\begin{align}\label{Rie decomp M}
    R_{ikjl} = W_{ikjl} + g_{ij}P_{kl} + g_{kl}P_{ij} - g_{il}P_{kj} - g_{kj}P_{il},
\end{align}
and similarly 
\begin{align}\label{RIe decomp Sigma}
    R^{\Sigma}_{ikjl} = h_{ij}P^{\Sigma}_{kl} + h_{kl}P^{\Sigma}_{ij} - h_{il}P^{\Sigma}_{kj} - h_{kj}P^{\Sigma}_{il},
\end{align}
where we have used $W^{\Sigma}_{ijkl} = 0$ since the Weyl curvature tensor vanishes on any Riemannian three-manifold. Note that $g_{ij} = h_{ij}$ on $\Sigma^3$. Putting (\ref{Gauss}),(\ref{Rie decomp M}), and (\ref{RIe decomp Sigma}) together, we have
on $\Sigma^3$ that
\begin{align}\label{expansion}
    W_{ikjl} + g_{ij}A_{kl} + g_{kl}A_{ij} - g_{il}A_{jk} - g_{jk}A_{il} = 0,
\end{align}
where 
\begin{align} \label{WA} 
A_{ij} = P_{ij} - P^{\Sigma}_{ij} + \frac{1}{18}H^2g_{ij} = W_{0i0j}.
\end{align}

Next, square both sides of (\ref{expansion}) and combine like terms. To simplify we calculate at $p\in\Sigma$ with respect to Fermi coordinates, so at $p$ we have $$g_{ij} = \delta_{ij},\,\,\, g_{i0} = 0,\,\,\, g_{00} = 1.$$  Also, in the following calculations we adopt the Einstein summation convention.  Since $W$ is trace-free, at $p$ we have 
\begin{align}
    0 = W_{ikil} + W_{0k0l}g_{00}.
\end{align}
Hence by (\ref{WA}) 
\begin{align}
    W_{ikil}A_{kl} = -W_{0k0l}A_{kl} = -|A|^2.
\end{align}
At $p$, 
\begin{align}
    g_{ij}g_{il} = g_{ij}g_{il} + g_{0j}g_{0l} = \delta_{jl},
\end{align}
hence 
\begin{align}
    -g_{ij}A_{kl}g_{il}A_{jk} = -\delta_{jl}A_{kl}A_{jk} = -|A|^2.
\end{align}
Putting everything together, we conclude that 
\begin{align}\label{W_ijkl on the boundary}
    |W_{ijkl}|^2 - 4|A|^2 +4\left(g_{ij}A_{ij}\right)^2 = 0.
\end{align}
Once again using the fact that $W$ is trace-free, 
\begin{align}
    g_{ij}A_{ij} = g_{ij}W_{i0j0} = g_{ij}W_{i0j0} + g_{00}W_{0000} = 0,
\end{align}
hence 
\begin{align}\label{W_{ijkl} on the boundary simplified}
    |W_{ijkl}|^2 = 4|A|^2.
\end{align}
Plugging $A_{ij} = P_{ij} - P^{\Sigma}_{ij} + \frac{1}{18}H^2g_{ij}$ into (\ref{W_{ijkl} on the boundary simplified}), we obtain the desired identity.
\end{proof}

\begin{remark}
There are two model cases for Lemma \ref{Weyl umbilic boundary}:
\begin{itemize}
    \item For the round hemisphere $(S_+^4,S^3,g_{S_+^4})$, we have on $S^3$ that
    \begin{align}
        W = 0,\,\,\,\, P_{ij} = \frac{1}{2}g_{ij},\,\,\,\, P_{ij}^{S^3} = \frac{1}{2}g_{ij},\,\,\,\, H = 0.
    \end{align}
    \item For the flat disc $(B^4,S^3,g_{Eucl})$, we have on $S^3$ that
    \begin{align}
        W = 0,\,\,\,\, P_{ij} = 0,\,\,\,\, P_{ij}^{S^3} = \frac{1}{2}g_{ij},\,\,\,\, H = 3.
    \end{align}
\end{itemize}
\end{remark}

From Lemma \ref{Weyl umbilic boundary}, it is natural to ask under what conditions the Weyl curvature is vanishing on the boundary. The following lemma reveals that the Weyl curvature is vanishing on the boundary under appropriate conformally invariant conditions. This lemma may be of some independent interest.

\begin{lemma}\label{Weyl boundary lemma}
Suppose $(M^4,\Sigma^3,g)$ satisfies
\begin{itemize}
    \item $B_g = 0$ in $M$;
    \item $S_g = 0$ on $\Sigma$;
    \item $(\Sigma^3,h)$ is umbilic;
    \item $(\Sigma^3,h)$ is conformally equivalent to a three-dimensional space form.
\end{itemize}
Then $W \equiv 0$ on $\Sigma^3$.
\end{lemma}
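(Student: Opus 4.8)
The plan is to combine the conformal invariance of all four hypotheses with Lemma 3.2 to reduce to a pointwise ODE-type argument along the normal direction. By Lemma 3.1(1)(2) the conditions $B_g = 0$ and $S_g = 0$ are conformally invariant, and the umbilicity of $\Sigma^3$ is conformally invariant as noted in Section 2; moreover the conclusion $W \equiv 0$ on $\Sigma^3$ is conformally invariant as well, since $W_g = e^{2w}(W_{\widetilde g})$ componentwise under $\widetilde g = e^{2w}g$. Therefore we may replace $g$ by the conformal representative for which the induced metric $h = g\vert_{\Sigma}$ is exactly a space-form metric on $\Sigma^3$. After this reduction $P^{\Sigma}_{ij} = c\, g_{ij}$ on $\Sigma^3$ for a constant $c$ (since a three-dimensional space form is Einstein with constant Schouten tensor), so by Lemma 3.2, and writing $A_{ij} = W_{0i0j}$,
\begin{align*}
A_{ij} = W_{0i0j} = P_{ij} - c\, g_{ij} + \tfrac{1}{18}H^2 g_{ij},
\end{align*}
and the trace-free part of $A$ equals the trace-free part of the Schouten (equivalently Ricci) tensor of $g$ restricted to the tangential directions. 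Since by Lemma 3.2 we also have $W_{ijk0} = 0$ on $\Sigma^3$, showing $W \equiv 0$ on $\Sigma^3$ is equivalent to showing $A \equiv 0$, i.e. that the tangential trace-free Schouten tensor of $g$ vanishes on $\Sigma^3$.

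Next I would bring in the remaining hypothesis $S_g = 0$. By Lemma 3.1(3), for a totally geodesic boundary $S_{ij} = \nabla^0 P_{ij}$; for a merely umbilic boundary the formula (\ref{S}) picks up the extra terms involving $H$ and $W_{0i0j}$, but after the conformal reduction one should first further normalize (using the conformal freedom that still fixes $h$, i.e. boundary-preserving conformal changes) to make $H = 0$ on $\Sigma^3$ — umbilic plus the Yamabe-type normalization gives totally geodesic. Then $S_g = \nabla^0 P = 0$ says the normal derivative of the Schouten tensor vanishes on the boundary. Combined with the Bach-flat equation (\ref{BF}), which is a second-order elliptic-type equation for $W$ (schematically $\Delta W + W \ast W + \mathrm{lower} = 0$ after commuting derivatives and using the second Bianchi identity), I would set up a unique-continuation / boundary-rigidity argument: the quantity $A = W_{0i0j}$ on $\Sigma^3$ and its relation to $\nabla^0 P$ provide both the "Dirichlet-type" and "Neumann-type" control needed. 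Concretely, differentiate the Gauss-equation identity (\ref{Weyl on the boundary}) in the normal direction, substitute $\nabla^0 P = 0$ and $\nabla^0 P^{\Sigma}$-type terms (controlled since $h$ is fixed), and feed the result into the Bianchi identity $\nabla^\alpha W_{\alpha i 0 j} = (\text{Schouten}\ast\text{Weyl terms})$ to derive that $A$ satisfies a homogeneous linear first-order system on $\Sigma^3$ forcing $A \equiv 0$.

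The main obstacle I anticipate is the middle step: carefully disentangling the $S$-tensor formula (\ref{S}) on an umbilic (not a priori totally geodesic) boundary and showing that the extra $H$- and $W_{0i0j}$-terms can be absorbed or eliminated by the conformal normalization, so that $S_g = 0$ genuinely delivers $\nabla^0 P = 0$ (or a clean substitute). The point is that the conformal transformation law $S_{\widetilde g} = e^{-w}S_g$ only tells us $S$ vanishes for every conformal representative, but the decomposition of $S$ into "normal derivative of Schouten" plus "curvature correction" terms changes form under conformal change, so one must track which representative makes the algebra cleanest. A secondary difficulty is that the full Bach equation involves the normal-normal and normal-tangential components of $W$, and one needs the Codazzi-type identity $W_{ijk0} = 0$ (already established in Lemma 3.2) together with the contracted second Bianchi identity $\nabla^\alpha W_{\alpha\beta\gamma\delta} = (n-3)\,C_{\beta\gamma\delta}$ (the Cotton tensor) to close the system purely in terms of $A$; in dimension four this is favorable, but the boundary bookkeeping of which components of the Cotton tensor are controlled requires care.
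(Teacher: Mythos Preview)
Your conformal reduction in the first paragraph is correct and matches the paper. However, the rest of the proposal has a genuine gap: you never identify a concrete mechanism that forces $A_{ij}=W_{0i0j}$ to vanish. The idea of differentiating the Gauss identity in the normal direction, combining with $\nabla^0 P=0$, and feeding into Bianchi to produce ``a homogeneous linear first-order system on $\Sigma^3$'' does not close: normal derivatives of $W$ and the Cotton tensor of $g$ give you information about how $A$ propagates off $\Sigma^3$, not an intrinsic constraint on $A$ along $\Sigma^3$. In particular there is no reason the putative first-order system would have only the trivial solution; a generic trace-free symmetric $2$-tensor on a space form is not killed by any first-order differential operator you can build from the data you list. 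The unique-continuation intuition is misplaced here because you are trying to show vanishing \emph{on} the hypersurface, not propagation of vanishing \emph{from} it.

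What the paper does instead is an integral identity. The key step you are missing is to evaluate the \emph{purely normal} component $B_{00}=0$ of the Bach equation at points of $\Sigma^3$:
\[
0=\nabla^{\gamma}\nabla^{\delta}W_{0\gamma 0\delta}+P^{ij}W_{0i0j}.
\]
Using $P_{ij}=P^{\Sigma}_{ij}+W_{0i0j}-\tfrac{1}{18}H^2 g_{ij}$ from Lemma~3.1 together with the space-form assumption $P^{\Sigma}_{ij}=\tfrac{c}{2}h_{ij}$ and the trace-freeness of $W$, the zeroth-order term collapses to $|W_{0i0j}|_{\Sigma}^2$. The second-order term is then computed in Fermi coordinates: the components $\nabla_0\nabla_0 W_{0000}$, $\nabla_i\nabla_0 W_{0i00}$, $\nabla_0\nabla_i W_{000i}$ all vanish by symmetries, and the remaining piece $\nabla_i\nabla_j W_{0i0j}$ reduces to the \emph{intrinsic} double divergence $\nabla^{\Sigma}_i\nabla^{\Sigma}_j W_{0i0j}$; this last reduction is exactly where $W_{ijk0}=0$ and $S_g=0$ are used (the $S$-tensor enters to eliminate the $\Gamma_{ij}^0\nabla_0 W_{0i0j}$ term). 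Integrating the resulting identity
\[
0=\nabla^{\Sigma}_i\nabla^{\Sigma}_j W_{0i0j}+|W_{0i0j}|_{\Sigma}^2
\]
over the closed manifold $\Sigma^3$ gives $\int_{\Sigma}|W_{0i0j}|^2=0$, hence $A\equiv 0$. Your extra normalization to $H=0$ is harmless but unnecessary: the paper's computation works for general umbilic boundary.
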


\begin{proof}
Since the conditions and the conclusion are both conformally invariant, we may assume that the boundary $(\Sigma^3,h)$ is isometric to a three-dimensional space form after a conformal transformation of the metric. In any case, we may scale the metric to obtain $P^{\Sigma}_{ij} = \frac{1}{2}ch_{ij}$, where $c=0,\pm1$.

Recall the Bach-flat condition is 
\begin{align*}
    0 = B_{\alpha\beta} = \nabla^{\gamma}\nabla^{\delta} W_{\alpha\gamma\beta\delta} + P^{\gamma\delta}W_{\alpha\gamma\beta\delta}.
\end{align*}

If we consider the pure normal directions of Bach tensor, we have
\begin{align}
    0 = \nabla^{\gamma}\nabla^{\delta} W_{0\gamma0\delta} + P^{\gamma\delta}W_{0\gamma0\delta}.
\end{align}
From the symmetry of Weyl curvature, this implies 
\begin{align} \label{Bach on sigma}
    0 = \nabla^{\gamma}\nabla^{\delta} W_{0\gamma0\delta} + P^{ij}W_{0i0j}.
\end{align}
From Lemma \ref{Weyl umbilic boundary}, we have on $\Sigma^3$ that
\begin{align}
    W_{0i0j} = P_{ij} - P^{\Sigma}_{ij} + \frac{1}{18}H^2g_{ij}
\end{align}
and thereby
\begin{align} \label{P^ij}
    P^{ij} = g^{\alpha{i}}g^{\beta{j}}P_{\alpha\beta} = g^{k{i}}g^{l{j}}P_{kl} = h^{k{i}}h^{l{j}}\left(P^{\Sigma}_{kl} + W_{0k0l} - \frac{1}{18}H^2h_{kl}\right).
\end{align}
Plugging (\ref{P^ij}) into (\ref{Bach on sigma}), we have on $\Sigma^3$
\begin{align}\label{Bach expansion}
    0 = \nabla^{\gamma}\nabla^{\delta} W_{0\gamma0\delta} + W_{0i0j}h^{k{i}}h^{l{j}}\left(P^{\Sigma}_{kl} + W_{0k0l} - \frac{1}{18}H^2h_{kl}\right).
\end{align}
Since $P^{\Sigma}_{kl} = \frac{1}{2}ch_{kl}$, we have
\begin{align}\label{Simplify 1}
    W_{0i0j}h^{k{i}}h^{l{j}}P^{\Sigma}_{kl} = \frac{1}{2}ch^{ij}W_{0i0j} = \frac{1}{2}cg^{\alpha\beta}W_{0\alpha0\beta} = 0,
\end{align}
and
\begin{align}\label{Simplify 2}
    \frac{1}{18}H^2W_{0i0j}h^{k{i}}h^{l{j}}h_{kl} = \frac{1}{18}H^2h^{ij}W_{0i0j} = 0.
\end{align}

Plugingg (\ref{Simplify 1}) and (\ref{Simplify 2}) into (\ref{Bach expansion}), we obtain on $\Sigma^3$
\begin{align}\label{Bach expansion simplified}
    0 = \nabla^{\gamma}\nabla^{\delta} W_{0\gamma0\delta} + |W_{0i0j}|_{\Sigma}^2.
\end{align}

We now simplify the first term in (\ref{Bach expansion simplified}). To simplify, we once again use Fermi coordinates based at a point $p \in \Sigma^3$.  Then at $p$, 
\begin{align} \label{Ch1} \begin{split} 
    & \Gamma_{ij}^k = 0, \,\,\,\,\,\, \Gamma_{ij}^{0} = L_{ij} = \frac{1}{3}Hg_{ij}, \\
    & \Gamma_{i0}^j = - L_{ij} = -\frac{1}{3}Hg_{ij}, \,\,\,\,\,\, \Gamma_{i0}^0 = \Gamma_{00}^0 = 0.
    \end{split}
\end{align}
Then 
\begin{align}
    \nabla_{\gamma}\nabla_{\delta} W_{0\gamma0\delta} = \nabla_{0}\nabla_{0} W_{0000} + \nabla_{i}\nabla_{0} W_{0i00} +
    \nabla_{0}\nabla_{i} W_{000i}+ \nabla_{i}\nabla_{j} W_{0i0j}
\end{align}
Using symmetries of the Weyl tensor, we note that 
\begin{align}\label{00 W_0000}
    \nabla_{0}\nabla_{0} W_{0000} = \nabla_{\delta} W_{\alpha\beta\gamma\gamma} = 0.
\end{align}
We now calculate $\nabla_{i}\nabla_{0} W_{0i00}$ at $p\in\Sigma^3$ using (\ref{Ch1}): 
\begin{align}
\begin{split}
    \nabla_{i}\nabla_{0} W_{0i00} & = \partial_i(\nabla_{0} W_{0i00}) - \Gamma_{i0}^{\alpha}\nabla_{\alpha}W_{0i00} - \Gamma_{i0}^{\alpha}\nabla_{0} W_{\alpha{i00}} - \Gamma_{ii}^{\alpha}\nabla_{0} W_{0\alpha{00}} \\
    & - \Gamma_{i0}^{\alpha}\nabla_{0} W_{0i\alpha{0}} - \Gamma_{i0}^{\alpha}\nabla_{0}W_{0i0\alpha} \\
    & = - \Gamma_{i0}^{j}\nabla_{0} W_{0ij{0}} - \Gamma_{i0}^{j}\nabla_{0}W_{0i0j}  \\
    & = 0
\end{split}
\end{align}
where we have used once again that all contractions of $W$ vanish.  Thus, we have at $p\in\Sigma^3$
\begin{align}\label{i0 W_0i00}
    \nabla_{i}\nabla_{0} W_{0i00} = 0,
\end{align}
and similarly at $p$ 
\begin{align}
    \nabla_{i}\nabla_{0} W_{000i} = 0.
\end{align}

Next, calculate $\nabla_{0}\nabla_{i} W_{000i}$ at $p\in\Sigma^3$. By the Ricci identity and symmetry of curvature tensor, 
\begin{align}\label{0i W_000i}
    \nabla_{0}\nabla_{i} W_{000i} = \nabla_{i}\nabla_{0} W_{000i} - R_{j{00i}}W_{j00i} - R_{j0{0i}}W_{0j0i} = 0.
\end{align}

We now claim that at $p$, 
\begin{align}\label{second order Weyl}
    \nabla_{i}\nabla_{j} W_{0i0j} = \nabla^{\Sigma}_{i}\nabla^{\Sigma}_{j} W_{0i0j}.
\end{align}
To see this, first note 
\begin{align}
    \nabla^{\Sigma}_{k}W_{0i0j} = \partial_kW_{0i0j},
\end{align}
hence 
\begin{align}
\begin{split}
    \nabla_{k}W_{0i0j} & = \partial_kW_{0i0j} - \Gamma_{k0}^{\alpha}W_{\alpha{i0j}} - \Gamma_{ki}^{\alpha}W_{0\alpha{0j}} - \Gamma_{k0}^{\alpha}W_{{0i}\alpha{j}} - \Gamma_{kj}^{\alpha}W_{0i0\alpha} \\
                     & = \partial_kW_{0i0j} - \Gamma_{k0}^{m}W_{m{i0j}} - \Gamma_{ki}^{m}W_{0m{0j}} - \Gamma_{k0}^{m}W_{{0i}m{j}} - \Gamma_{kj}^{m}W_{0i0m} \\
                     & = \partial_kW_{0i0j} - \Gamma_{k0}^{m}W_{m{i0j}} - \Gamma_{k0}^{m}W_{{0i}m{j}} \\
                     & = \nabla^{\Sigma}_{k}W_{0i0j},
\end{split}
\end{align}
where we have used $W_{ijk0} = 0$ on $\Sigma^3$ by Lemma \ref{Weyl umbilic boundary}.  Therefore, 
\begin{align}
    \nabla_{k}W_{0i0j} = \nabla^{\Sigma}_{k}W_{0i0j}.
\end{align}
Also, 
\begin{align}
    \nabla^{\Sigma}_{i}\nabla^{\Sigma}_{j}W_{0i0j} = \partial_i{(\nabla^{\Sigma}_{j}W_{0i0j})}.
\end{align}
It follows that 
\begin{align} \label{HW} 
\begin{split}
    \nabla_{i}\nabla_{j} W_{0i0j} & = \partial_i(\nabla_{j} W_{0i0j}) - \Gamma_{ij}^{\alpha}\nabla_{\alpha}W_{0i0j} - \Gamma_{i0}^{\alpha}\nabla_{j} W_{\alpha{i0j}} - \Gamma_{ii}^{\alpha}\nabla_{j} W_{0\alpha{0j}} \\
    & - \Gamma_{i0}^{\alpha}\nabla_{j} W_{0i\alpha{j}} - \Gamma_{ij}^{\alpha}\nabla_{j}W_{0i0\alpha} \\
                                  & = \partial_i(\nabla^{\Sigma}_{j}W_{0i0j}) - \Gamma_{ij}^{0}\nabla_{0}W_{0i0j} - \Gamma_{i0}^{k}\nabla_{j} W_{k{i0j}} - \Gamma_{i0}^{k}\nabla_{j} W_{0ik{j}} \\
                                  & = \nabla^{\Sigma}_{i}\nabla^{\Sigma}_{j}W_{0i0j} - \Gamma_{ij}^{0}\nabla_{0}W_{0i0j} - \Gamma_{i0}^{k}\nabla_{j} W_{k{i0j}} - \Gamma_{i0}^{k}\nabla_{j} W_{0ik{j}},
\end{split}
\end{align}
where we have used that 
\begin{align}
    \Gamma_{ii}^{\alpha}\nabla_{j} W_{0\alpha{0j}} = \Gamma_{ii}^{0}\nabla_{j} W_{00{0j}} = 0
\end{align}
and
\begin{align}
    \Gamma_{ij}^{\alpha}\nabla_{j}W_{0i0\alpha} = \Gamma_{ij}^{0}\nabla_{j}W_{0i00} = 0.
\end{align}
Note that $W_{ijk0} = 0$ on $\Sigma^3$. Now we calculate the last three terms in (\ref{HW}): 
\begin{align}\label{Gnabla}
    \Gamma_{i0}^{k}\nabla_{j} W_{ki0j} = -\Gamma_{i0}^{k}\Gamma_{j0}^{m} W_{kimj} -\Gamma_{i0}^{k} \Gamma_{jk}^{0} W_{0i0j} -\Gamma_{i0}^{k} \Gamma_{ji}^{0} W_{k00j} -\Gamma_{i0}^{k}\Gamma_{jj}^{0} W_{ki00} = 0
\end{align}
\begin{align}
    \Gamma_{i0}^{k}\nabla_{j} W_{0ikj} = -\Gamma_{i0}^{k}\Gamma_{j0}^{m} W_{mikj} -\Gamma_{i0}^{k}\Gamma_{ji}^{0} W_{00kj} -\Gamma_{i0}^{k}\Gamma_{jk}^{0} W_{0i0j} -\Gamma_{i0}^{k}\Gamma_{jj}^{0} W_{0ik0} = 0
\end{align}

Since $S_g = 0$ on $\Sigma$, 
\begin{align}
    0 = \nabla_{m}W_{mi0j} + \nabla_{k}W_{kj0i} + \nabla_0W_{0i0j} + \frac{4}{3}HW_{0i0j},
\end{align}
which implies at $p$ 
\begin{align}
    \nabla^{0}W_{0i0j} = - \nabla_{m}W_{mi0j} - \nabla_{k}W_{kj0i} - \frac{4}{3}HW_{0i0j}.
\end{align}
Therefore, 
\begin{align}
    \Gamma_{ij}^{0}\nabla^{0}W_{0i0j} = -\frac{1}{3}Hg_{ij}\left(\nabla^{m}W_{mi0j} + \nabla^{k}W_{kj0i} + \frac{4}{3}HW_{0i0j}\right) = 0,
\end{align}
where the last equality follows the same way as (\ref{Gnabla}) is established.  

To summarize, we have shown that 
\begin{align}
    \nabla_{i}\nabla_{j} W_{0i0j} = \nabla^{\Sigma}_{i}\nabla^{\Sigma}_{j} W_{0i0j},
\end{align}
which is (\ref{second order Weyl}).

Plugging (\ref{00 W_0000}), (\ref{i0 W_0i00}), (\ref{0i W_000i}), and (\ref{second order Weyl}) into (\ref{Bach expansion simplified}), we have on $\Sigma^3$
\begin{align}
    0 = \nabla_{\Sigma}^{i}\nabla_{\Sigma}^{j} W_{0i0j} + \big|W_{0i0j}\big|_{\Sigma}^2.
\end{align}
Integrating this over $\Sigma^3$ and using the divergence theorem, we conclude that $W_{0i0j} \equiv 0$ on $\Sigma^3$.
It follows from Lemma \ref{Weyl umbilic boundary} that $W \equiv 0$ on $\Sigma^3$.
\end{proof}

%\begin{remark}
%It is interesting to point out that the condition $(\Sigma^3,h)$ is conformally equivalent to a three-dimensional space form cannot be weakened (even if we still assume $(\Sigma^3,h)$ is locally conformally flat), as can be seen from the following example. Suppose that $(S_+^2\times{S^2},g_{prod})$ is the product of a round upper hemisphere and a round sphere. The boundary is $(S^1\times{S^2},h_{prod})$ where $h_{prod}$ is the standard product metric and thereby is locally conformally flat. Note that $(S_+^2\times{S^2},g_{prod})$ is an Einstein manifold with totally geodesic boundary $(S^1\times{S^2},h_{prod})$ which satisfies the first three conditions in Lemma \ref{Weyl boundary lemma}. However, the Weyl curvature of $(S_+^2\times{S^2},g_{prod})$ is pointwise nonzero.
%\end{remark}

\begin{remark}
If $(X^4, N^3,g_{+})$ is a CCE four-manifold, then any compactification $\bar{g} = \rho^2 g_{+}$ has $W_{\bar{g}} \vert_{N^3} = 0$; see \cite{CG18}, Lemma 2.3.
\end{remark}

%%%%%%%%%%%%%%%%%%%
\section{Expansion of the metric near the boundary}  \label{gT} 
%%%%%%%%%%%%%%%%%%%%

In this section we compute the expansion of the metric near the boundary that will be used in the proof of Theorem \ref{main}.  Although some of the terms in the expansion are well known, we will need the precise form up to order four.  Also, we carry out the calculations in arbitrary dimension.

Suppose $(M^n,\Sigma^{n-1},g)$ is a smooth manifold with boundary and $g$ is a Riemannian metric smooth up to the boundary.  Let $\{ x^i \}$ be local coordinates on $\Sigma^{n-1}$. If $r$ is the distance function to $\Sigma^{n-1}$, then we can identify a collar neighborhood of the boundary with $\Sigma^{n-1}\times{[0,\epsilon)}$, with coordinates given by $(x_i,r)$.  We want to compute the expansion of $g$ in $\Sigma^{n-1}\times{[0,\epsilon)}$.  In $\Sigma^{n-1}\times[0,\epsilon)$, write the metric $g$ as
\begin{align}
    g = dr^2 + h_{ij}(x,r)dx^idx^j,
\end{align}
where
\begin{align}
    h_{ij} = \left\langle{\partial_i,\partial_j}\right\rangle.
\end{align}
The first derivative is given by
\begin{align}
    \frac{\partial}{\partial{r}}h_{ij} = \left\langle{\nabla_{\partial_{r}}\partial_i,\partial_j}\right\rangle + \left\langle{\nabla_{\partial_{r}}\partial_j,\partial_i}\right\rangle
\end{align}
Note that
\begin{align}\label{2nd ff}
    \nabla_{\partial_{i}}\partial_r = \nabla_{\partial_{r}}\partial_i = -L_{il}h^{lk}{\partial_{k}}.
\end{align}
Hence,
\begin{align}
    \frac{\partial}{\partial{r}}h_{ij} = -2L_{ij}.
\end{align}

The second derivative is given by
\begin{align}
\begin{split}
    \frac{\partial^2}{\partial{r^2}}h_{ij} & = \left\langle{\nabla_{\partial_{r}}\nabla_{\partial_{r}}\partial_i,\partial_j}\right\rangle + 2\left\langle{\nabla_{\partial_{r}}\partial_i,\nabla_{\partial_{r}}\partial_j}\right\rangle + \left\langle{\nabla_{\partial_{r}}\nabla_{\partial_{r}}\partial_j,\partial_i}\right\rangle \\
     & = \left\langle{\nabla_{\partial_{r}}\nabla_{\partial_{r}}\partial_i,\partial_j}\right\rangle + \left\langle{\nabla_{\partial_{r}}\nabla_{\partial_{r}}\partial_j,\partial_i}\right\rangle + 2L_{ik}L_{j}^k
\end{split}
\end{align}
From the Jacobi field equation, we have
\begin{align}\label{Jaboci field}
    \nabla_{\partial_{r}}\nabla_{\partial_{r}}\partial_i = -R_{0i0}^k\partial_{k}
\end{align}
Hence, we have
\begin{align}
    \left\langle{\nabla_{\partial_{r}}\nabla_{\partial_{r}}\partial_i,\partial_j}\right\rangle = - R_{0i0j}
\end{align}
and thereby
\begin{align}
    \frac{\partial^2}{\partial{r^2}}h_{ij} = -2R_{0i0j} + 2L_{ik}L_{j}^k
\end{align}

The third derivative is given by
\begin{align}
\begin{split}
    \frac{\partial^3}{\partial{r^3}}h_{ij} & = \left\langle{\nabla_{\partial_r}\nabla_{\partial_{r}}\nabla_{\partial_{r}}\partial_i,\partial_j}\right\rangle + \left\langle{\nabla_{\partial_{r}}\nabla_{\partial_{r}}\nabla_{\partial_{r}}\partial_j,\partial_i}\right\rangle \\
    & + 3\left\langle{\nabla_{\partial_{r}}\nabla_{\partial_{r}}\partial_i,\nabla_{\partial_{r}}\partial_j}\right\rangle + 3\left\langle{\nabla_{\partial_{r}}\nabla_{\partial_{r}}\partial_j,\nabla_{\partial_{r}}\partial_i}\right\rangle
\end{split}
\end{align}
By (\ref{2nd ff}) and (\ref{Jaboci field}), We calculate
\begin{align}
    \left\langle{\nabla_{\partial_{r}}\nabla_{\partial_{r}}\partial_i,\nabla_{\partial_{r}}\partial_j}\right\rangle = R_{0i0}^kh_{km}L_{jl}h^{lm} = R_{0i0k}L^{k}_{j}
\end{align}
%From (\ref{Jaboci field}), we have
%\begin{align}\label{Jacobi field identity}
%   \left\langle{\nabla_{\partial_{r}}\nabla_{\partial_{r}}\partial_i,\partial_j}\right\rangle = -R_{0i0j}.
%\end{align}
%\begin{align}
%    \left\langle{\nabla_{\partial_{r}}\nabla_{\partial_{r}}\partial_i,\nabla_{\partial_{r}}\partial_j}\right\rangle = L_{j}^{k}R_{0i0k},\,\,\,\, \left\langle{\nabla_{\partial_{r}}\nabla_{\partial_{r}}\nabla_{\partial_{r}}\partial_j,\partial_i}\right\rangle = L_{i}^{k}R_{0j0k}
%\end{align}
%Differentiate (\ref{Jacobi field identity}):
%\begin{align}
%    \left\langle{\nabla_{\partial_{r}}\nabla_{\partial_{r}}\nabla_{\partial_{r}}\partial_i,\partial_j}\right\rangle + %\left\langle{\nabla_{\partial_{r}}\nabla_{\partial_{r}}\partial_i,\nabla_{\partial_{r}}\partial_j}\right\rangle = -\partial_rR_{0i0j}.
%\end{align}
%From the formula of covariant derivative, we have
%\begin{align}
%    \partial_rR_{0i0j} = \nabla_{\partial_r}R_{0i0j} - L_{j}^{k}R_{0i0k} - L_{i}^{k}R_{0j0k}
%\end{align}
%Putting things together, we have
%\begin{align}\label{h 3rd derivative}
%    \frac{\partial^3}{\partial{r^3}}h_{ij} = -2\nabla^0{R}_{0i0j}+8L^k_{(i}{R}_{j)0k0},
%\end{align}

%Since it is more convenient to deal with covariant derivative of tensors, we now consider another way of establishing (\ref{h 3rd derivative}).

The right hand side of (\ref{Jaboci field}) can be understood as the contraction of two tensors. We may take the covariant derivative:
\begin{align}\label{3rd derivative covariant}
    \nabla_{\partial_{r}}\nabla_{\partial_{r}}\nabla_{\partial_{r}}\partial_i = -\nabla_0R_{0i0}^k\partial_{k} - R_{0i0}^k\nabla_{\partial_{r}}\partial_{k}, % = -\nabla_0R_{0i0}^k\partial_{k} + R_{0i0}^kL_{k}^{m}\partial_{m},
\end{align}
which implies
\begin{align}
    \left\langle{\nabla_{\partial_r}\nabla_{\partial_{r}}\nabla_{\partial_{r}}\partial_i,\partial_j}\right\rangle = -\nabla_0 R_{0i0j} + L^k_{j}R_{i0k0}.
\end{align}
This identity easily implies
\begin{align}\label{h 3rd derivative}
    \frac{\partial^3}{\partial{r^3}}h_{ij} = -2\nabla_0{R}_{0i0j}+8L^k_{(i}{R}_{j)0k0},
\end{align}
where parentheses around a pair of subscripts denotes symmetrization in that pair. 

The fourth derivative is given by
\begin{align}
\begin{split}
    \frac{\partial^4}{\partial{r^4}}h_{ij} & = \left\langle{\nabla_{\partial_{r}}\nabla_{\partial_r}\nabla_{\partial_{r}}\nabla_{\partial_{r}}\partial_i,\partial_j}\right\rangle + \left\langle{\nabla_{\partial_{r}}\nabla_{\partial_{r}}\nabla_{\partial_{r}}\nabla_{\partial_{r}}\partial_j,\partial_i}\right\rangle \\
    & + 4\left\langle{\nabla_{\partial_{r}}\nabla_{\partial_{r}}\nabla_{\partial_{r}}\partial_i,\nabla_{\partial_{r}}\partial_j}\right\rangle + 4\left\langle{\nabla_{\partial_{r}}\nabla_{\partial_{r}}\nabla_{\partial_{r}}\partial_j,\nabla_{\partial_{r}}\partial_i}\right\rangle \\
    & + 6\left\langle{\nabla_{\partial_{r}}\nabla_{\partial_{r}}\partial_j,\nabla_{\partial_{r}}\nabla_{\partial_{r}}\partial_i}\right\rangle
\end{split}
\end{align}
By (\ref{2nd ff})(\ref{Jaboci field}) and (\ref{3rd derivative covariant}), We calculate
\begin{align}\label{31}
    \left\langle{\nabla_{\partial_{r}}\nabla_{\partial_{r}}\nabla_{\partial_{r}}\partial_i,\nabla_{\partial_{r}}\partial_j}\right\rangle = \nabla_0R_{0i0k}L_{j}^{k} - R_{0i0k}L_{l}^{k}L_{j}^l
\end{align}
\begin{align}\label{22}
    \left\langle{\nabla_{\partial_{r}}\nabla_{\partial_{r}}\partial_j,\nabla_{\partial_{r}}\nabla_{\partial_{r}}\partial_i}\right\rangle = R_{0j0}^k{R}_{0i0k}
\end{align}
For the term $\left\langle{\nabla_{\partial_{r}}\nabla_{\partial_r}\nabla_{\partial_{r}}\nabla_{\partial_{r}}\partial_i,\partial_j}\right\rangle$, we take the covariant derivative of (\ref{3rd derivative covariant}) to obtain
\begin{align}
    \nabla_{\partial_{r}}\nabla_{\partial_{r}}\nabla_{\partial_{r}}\nabla_{\partial_{r}}\partial_i = -\nabla_0\nabla_0R_{0i0}^k\partial_{k} - 2\nabla_0R_{0i0}^k\nabla_{\partial_{r}}\partial_{k} - R_{0i0}^k\nabla_{\partial_{r}}\nabla_{\partial_{r}}\partial_{k},
\end{align}
which implies
\begin{align}\label{40}
    \left\langle{\nabla_{\partial_{r}}\nabla_{\partial_r}\nabla_{\partial_{r}}\nabla_{\partial_{r}}\partial_i,\partial_j}\right\rangle =  -\nabla_0\nabla_0R_{0i0j} + 2\nabla_0R_{0i0k}L_{j}^k + R_{0i0}^k{R}_{0j0k}
\end{align}
Putting (\ref{31})(\ref{22}) and (\ref{40}) together, we have
\begin{align}\label{h 4th derivative}
\begin{split}
     \frac{\partial^4}{\partial{r^4}}h_{ij} = & -2\nabla_0\nabla_0R_{0i0j} + 6\nabla_0{R_{0i0k}}L_{j}^{k} + 6\nabla_0{R_{0j0k}}L_{i}^{k} \\
    & - 4R_{0i0k}L_{l}^{k}L_{j}^l - 4R_{0j0k}L_{l}^{k}L_{i}^l + 8R_{0i0}^k{R}_{0j0k}
\end{split}
\end{align}

We summarize the preceding calculations in the following lemma: 

\begin{lemma}\label{boundary expansion}
Suppose $(M^{n}, \Sigma^{n-1}, g)$ is a Riemannian manifold with boundary. Then we have the expansion for metric $g$ in $\Sigma\times[0,\epsilon)$
\begin{align}
    g = dr^2 + h_{ij}(x,r)dx^idx^j
\end{align}
where
\begin{align}
    h_{ij}(x,r) = h_{ij}^{(0)} + rh_{ij}^{(1)} + \frac{r^2}{2!}h_{ij}^{(2)} + \frac{r^3}{3!}h_{ij}^{(3)} + \frac{r^4}{4!}h_{ij}^{(4)} + O(r^5)
\end{align}
where $h_{ij}^{(k)}$ are symmetric $2$-tensors defined on $\Sigma^{n-1}$
\begin{align} \label{TayCoeff} 
    \begin{split}
        h_{ij}^{(0)} = & \,\,\, g_{ij} \\
        h_{ij}^{(1)} = & -2L_{ij} \\
        h_{ij}^{(2)} = & -2R_{0i0j} + 2L_{ik}L_{j}^k \\
        h_{ij}^{(3)} = & -2\nabla_0{R}_{0i0j}+4L^k_{i}{R}_{j0k0} + 4L^k_{j}{R}_{i0k0}, \\
        h_{ij}^{(4)} = & -2\nabla_0\nabla_0R_{0i0j} + 6\nabla_0{R_{0i0k}}L_{j}^{k} + 6\nabla_0{R_{0j0k}}L_{i}^{k} \\
    & - 4R_{0i0k}L_{l}^{k}L_{j}^l - 4R_{0j0k}L_{l}^{k}L_{i}^l + 8R_{0i0}^k{R}_{0j0k}
    \end{split}
\end{align}
\end{lemma}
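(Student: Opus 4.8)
The plan is to work in the geodesic (Fermi) coordinates adapted to the boundary and to read off each Taylor coefficient $h_{ij}^{(k)} = \partial_r^k h_{ij}\big|_{r=0}$ by repeated covariant differentiation along the unit normal. Since $r = \mathrm{dist}(\cdot,\Sigma^{n-1})$ is smooth in a collar with $|\nabla r|\equiv 1$, its integral curves are unit-speed geodesics meeting $\Sigma^{n-1}$ orthogonally, and the Gauss lemma puts the metric in the product form $g = dr^2 + h_{ij}(x,r)\,dx^idx^j$ with $h_{ij}(x,0)$ the induced metric; this is the coefficient $h_{ij}^{(0)}$. Taylor's theorem in $r$ then reduces the lemma to computing $\partial_r^k h_{ij}$ at $r=0$ for $k=1,\dots,4$, and no feature of the argument is special to $n=4$.

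To compute the derivatives I would differentiate $h_{ij} = \langle\partial_i,\partial_j\rangle$ repeatedly along $\partial_r$ using metric compatibility and expand by the Leibniz rule, which expresses $\partial_r^k h_{ij}$ as a weighted sum of pairings $\binom{k}{a}\langle\nabla_{\partial_r}^a\partial_i,\nabla_{\partial_r}^b\partial_j\rangle$ with $a+b=k$. Three geometric identities supply all the needed terms: (i) $\nabla_{\partial_r}\partial_i = \nabla_{\partial_i}\partial_r = -L_i^k\partial_k$, where the first equality uses that $\partial_r$ and $\partial_i$ are coordinate fields (so the torsion-free connection gives $\nabla_{\partial_r}\partial_i - \nabla_{\partial_i}\partial_r = [\partial_r,\partial_i] = 0$) and the second is the definition of the shape operator of the hypersurface $\{r=\mathrm{const}\}$; (ii) the Jacobi equation $\nabla_{\partial_r}\nabla_{\partial_r}\partial_i = -R_{0i0}^k\partial_k$, valid because $\partial_i$ is a Jacobi field along each normal geodesic; and (iii) the Leibniz rule for the covariant derivative of the contraction on the right of (ii), $\nabla_{\partial_r}\big(R_{0i0}^k\partial_k\big) = (\nabla_0 R_{0i0}^k)\partial_k + R_{0i0}^k\nabla_{\partial_r}\partial_k$, iterated once more to reach the fourth derivative. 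Substituting (i)--(iii) into the Leibniz expansion, using metric compatibility to lower indices (e.g. $\langle R_{0i0}^k\partial_k,\partial_j\rangle = R_{0i0j}$), and evaluating at $r=0$ gives the formulas in (\ref{TayCoeff}); the intermediate identities (\ref{h 3rd derivative}) and (\ref{h 4th derivative}) record the third and fourth derivatives, and one checks $8L^k_{(i}R_{j)0k0} = 4L^k_iR_{j0k0} + 4L^k_jR_{i0k0}$ to match the stated form of $h_{ij}^{(3)}$.

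The only real difficulty is organizational rather than conceptual: correctly tracking the binomial weights in the Leibniz rule, the extra product-rule terms generated each time one differentiates the curvature contraction, and the signs coming from the curvature convention. Because the fourth derivative already involves $\nabla_0\nabla_0 R_{0i0j}$ together with cross terms of the form $\nabla_0 R_{0i0k}L_j^k$, $R_{0i0k}L_l^k L_j^l$, and $R_{0i0}^kR_{0j0k}$, keeping these straight is where an error would most likely creep in --- which is precisely why it is worth recording (\ref{h 3rd derivative}) and (\ref{h 4th derivative}) explicitly before assembling the lemma. Beyond this bookkeeping, the content is exactly that the Gauss lemma, the shape-operator identity, and the Jacobi equation, iterated, pin down the $4$-jet of $g$ at $\Sigma^{n-1}$.
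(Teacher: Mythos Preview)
Your proposal is correct and follows essentially the same approach as the paper: set up Fermi coordinates, differentiate $h_{ij}=\langle\partial_i,\partial_j\rangle$ repeatedly along $\partial_r$ using metric compatibility, and feed in the shape-operator identity $\nabla_{\partial_r}\partial_i=-L_i^{\,k}\partial_k$, the Jacobi equation $\nabla_{\partial_r}^2\partial_i=-R_{0i0}{}^k\partial_k$, and its iterated covariant derivatives. The paper carries out each order $k=1,\dots,4$ sequentially rather than phrasing it as a binomial Leibniz expansion, but the ingredients and the resulting computation are identical.
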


%%%%%%%%%%%%
\section{The proof of Theorem \ref{main}} 
%%%%%%%%%%%%%%%%%%%%%%%%%%%

The proof of Theorem \ref{main} follows the outline of the proof of Theorem A given by Hang-Wang, and can be divided into two steps.  The first step is to show that the metric near the boundary has an expansion coinciding with the round upper hemisphere up to arbitrary order.  The the second step is to use the analyticity of Bach-flat metrics with constant scalar curvature (see below), to show that the manifold has constant sectional curvature.  We remark that in the proof of the Hang-Wang result, since the Einstein condition is second order in the metric they only needed the explicit expansion of the metric up to second order. In our setting, since the Bach-flat condition is fourth order we needed to calculate the expansion of metric to the fourth order in the previous section.  

Assume $(M^4,\Sigma^3,g)$ is Bach-flat with $S$-flat and umbilic boundary.  Since these assumptions are conformally invariant, we may further assume that $g$ is a Yamabe metric with scalar curvature normalized so that $R_g = 12$ and totally geodesic boundary.  Finally, we assume that the induced metric $h = g\vert_{\Sigma^3}$ is isometric to the standard metric on $S^3$.  Under these assumptions, we have on $\Sigma^3$ 
\begin{align} \label{RS} \begin{split} 
P^{\Sigma}_{ij} &= \frac{1}{2}h_{ij},  \\
R^{\Sigma} &= 6. 
\end{split}
\end{align} 
Then the Gauss curvature equations imply (see (\ref{Gauss 2nd trace})) on $\Sigma^3$ that
\begin{align} \label{P00}
    P_{00} = \frac{1}{2}.
\end{align}
Also, from Lemma \ref{Weyl boundary lemma} we conclude 
\begin{align} \label{WS} 
W \big|_{\Sigma^3} \equiv 0. 
\end{align}
The vanishing of the Weyl tensor on the boundary implies, by Lemma \ref{Weyl umbilic boundary}, that the Schouten tensor of $g$ satisfies on $\Sigma^3$
\begin{align} \label{PS} 
    P_{ij} = \dfrac{1}{2}h_{ij}.
\end{align}
Using the decomposition of curvature tensor along with (\ref{WS}), (\ref{P00}), and (\ref{PS}),  we obtain on $\Sigma^3$ that
\begin{align} \label{RZS} 
    R_{0i0j} = W_{0i0j} + P_{00}g_{ij} + P_{ij}g_{00} = h_{ij}.
\end{align}

Recall from Section \ref{gT} that near the boundary, the metric $g$ can be expressed as
\begin{align} \label{gexp} 
    g = dr^2 + h_{ij}(x,r)dx^i dx^j, 
\end{align}
and by Lemma \ref{boundary expansion}, $h_{ij}(x,r)$ has the expansion (up to order four) 
\begin{align}  \label{gTaylor}
h_{ij}(x,r) = h_{ij}^{(0)} + rh_{ij}^{(1)} + \frac{r^2}{2!}h_{ij}^{(2)} + \frac{r^3}{3!}h_{ij}^{(3)} + O(r^4),
\end{align}
and $h_{ij}^{(k)}$ are given by (\ref{TayCoeff}).  In particular, by (\ref{RZS}) and the fact that $\Sigma^3$ is totally geodesic we immediately have 
\begin{align} \label{first2} 
\begin{split}
    h^{(1)}_{ij} & = 0 \\
    h^{(2)}_{ij} & = -2R_{0i0j} = -2g_{ij}.
  %  h'''_{ij} & = 0
\end{split}
\end{align}
To determine $h^{(3)}_{ij}$, we need the following result from \cite{GZ20}:

\begin{lemma}\label{S equal to h3}
Suppose $(M^4,\Sigma^3,g)$ is a smooth Riemannian manifold with constant scalar curvature and totally geodesic boundary. Then 
\begin{align}
    h^{(3)}_{ij} = -4S_{ij}.
\end{align}
\end{lemma}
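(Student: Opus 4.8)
The plan is to reduce the whole identity to a single normal derivative of the Schouten tensor and then invoke Lemma~\ref{S tensor}. Since the boundary is totally geodesic, $L\equiv 0$ and $H\equiv 0$, so the expansion coefficient in Lemma~\ref{boundary expansion} collapses: from (\ref{TayCoeff}), $h^{(3)}_{ij}=-2\nabla_0R_{0i0j}$ on $\Sigma^3$. Moreover, Lemma~\ref{S tensor}(3) gives $S_{ij}=\nabla^0P_{ij}=\nabla_0P_{ij}$ on $\Sigma^3$ (the last equality in Fermi coordinates, where $g^{00}=1$). Hence the lemma is equivalent to the pointwise identity $\nabla_0R_{0i0j}=2\,\nabla_0P_{ij}$ on $\Sigma^3$, and this is what I would prove.

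First I would decompose $\nabla_0R_{0i0j}$. Differentiating $Rm=W+P\bcw g$ covariantly (using $\nabla g=0$) and reading off the $(0,i,0,j)$ component along the collar $\Sigma^3\times[0,\epsilon)$, where $g_{00}\equiv 1$ and $g_{0i}\equiv 0$, yields $\nabla_0R_{0i0j}=\nabla_0W_{0i0j}+\nabla_0P_{ij}+g_{ij}\,\nabla_0P_{00}$. I claim $\nabla_0P_{00}=0$ on $\Sigma^3$: constant scalar curvature makes $P$ divergence-free, so $\nabla^0P_{00}+\nabla^kP_{k0}=0$ along $\Sigma^3$; and since the boundary is totally geodesic, the Codazzi equations (with $L\equiv 0$) force the mixed Ricci, hence mixed Schouten, components to vanish identically on $\Sigma^3$, i.e. $P_{k0}\equiv 0$. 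Evaluating the divergence identity at $p\in\Sigma^3$ in Fermi coordinates adapted to $h$-normal coordinates for $(\Sigma^3,h)$ at $p$ — where the tangential Christoffel symbols vanish at $p$ and all Christoffel symbols carrying a normal index vanish (identically in the collar, or because $L\equiv 0$) — the term $\nabla^kP_{k0}$ reduces to an ordinary tangential derivative of a function vanishing along $\Sigma^3$, hence is $0$. This gives $\nabla_0R_{0i0j}=\nabla_0W_{0i0j}+\nabla_0P_{ij}$ on $\Sigma^3$, so it remains to show $\nabla_0W_{0i0j}=\nabla_0P_{ij}$.

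For this I would compute $S_{ij}$ directly from its definition (\ref{S}). By Lemma~\ref{Weyl umbilic boundary} together with the symmetries of $W$, every component of $W$ with exactly one normal index vanishes on $\Sigma^3$; in particular $W_{ki0j}\equiv 0$ on $\Sigma^3$, so by the same Fermi-coordinate reduction $\nabla_kW_{ki0j}=0$ at boundary points, and therefore $\nabla^\alpha W_{\alpha i0j}=\nabla_0W_{0i0j}$ on $\Sigma^3$ (and likewise with $i,j$ interchanged). Substituting into (\ref{S}) with $H=0$, and using $W_{0j0i}=W_{0i0j}$ and $\nabla^0W_{0i0j}=\nabla_0W_{0i0j}$, all terms cancel but one: $S_{ij}=\nabla_0W_{0i0j}$ on $\Sigma^3$. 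Comparing with $S_{ij}=\nabla_0P_{ij}$ from Lemma~\ref{S tensor}(3) gives $\nabla_0W_{0i0j}=\nabla_0P_{ij}$, hence $\nabla_0R_{0i0j}=2\nabla_0P_{ij}=2S_{ij}$, and finally $h^{(3)}_{ij}=-2\nabla_0R_{0i0j}=-4S_{ij}$.

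The only genuinely delicate point — and the one I expect to take the most care to write out — is the repeated reduction of ambient tangential covariant derivatives to ordinary derivatives at a boundary point: one must check that every Christoffel correction carrying a normal index really drops out, which is exactly where the totally geodesic hypothesis is used (beyond trivializing (\ref{TayCoeff})). The constant scalar curvature hypothesis is used only once, to make $P$ divergence-free and thereby kill $\nabla_0P_{00}$; geometrically this is what removes the pure-trace contribution $g_{ij}\nabla_0P_{00}$, which would be incompatible with the trace-free tensor $S_{ij}$.
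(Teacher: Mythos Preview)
Your proof is correct and follows essentially the same route as the paper's: reduce $h^{(3)}_{ij}$ to $-2\nabla_0 R_{0i0j}$, decompose via $Rm = W + P\bcw g$, identify both $\nabla_0 W_{0i0j}$ and $\nabla_0 P_{ij}$ with $S_{ij}$, and kill the $g_{ij}\nabla_0 P_{00}$ term using constant scalar curvature. The only cosmetic difference is in this last step, where the paper uses constancy of $\operatorname{tr} P = R/6$ together with trace-freeness of $S$, whereas you use that $P$ is divergence-free and $P_{i0}\equiv 0$ on $\Sigma^3$; both are immediate from the hypotheses.
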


Combining (\ref{first2}) with Lemma \ref{S equal to h3}, we conclude 
\begin{align} \label{h4}
    h_{ij}(x,r) = \cos^2{(r)}h_{ij}(x,0) + O(r^4), \,\,\,\, as \,\,\,\, r \to 0.
\end{align}

\begin{lemma}\label{expansion m}For every integer $m \geq 1$,
\begin{align}\label{inifite order}
    h_{ij}(x,r) = \cos^2{(r)}h_{ij}(x,0) + O(r^m), \,\,\,\, as \,\,\,\, r \to 0
\end{align}
\end{lemma}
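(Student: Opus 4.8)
The plan is to run an induction on $m$, using the Bach-flat equation as a second-order ODE (in $r$) for the coefficient $h^{(m)}$ of the Taylor expansion, and showing that at each stage the only admissible solution forces $h^{(m)}$ to agree with the Taylor coefficient of $\cos^2(r)\,h_{ij}(x,0)$. The base case $m = 4$ is exactly \eqref{h4}, established above from \eqref{first2} and Lemma \ref{S equal to h3}. For the inductive step, assume $h_{ij}(x,r) = \cos^2(r)\,h_{ij}(x,0) + O(r^m)$ for some $m \geq 4$, and write $h_{ij}(x,r) = \cos^2(r)\,h_{ij}(x,0) + r^m\, \phi_{ij}(x) + O(r^{m+1})$. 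The goal is to show $\phi_{ij} \equiv 0$. Note that modulo $O(r^m)$ the metric is exactly the round hemisphere metric (in geodesic normal form based at $\Sigma^3$), since $\cos^2(r)\,h_0 + dr^2$ is precisely the round $S^4_+$ metric when $h_0$ is the round $S^3$; in particular all curvature quantities of $g$ agree with those of the round hemisphere up to errors that are controlled by $\phi_{ij}$ and are of sufficiently high order.

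The key step is to extract an equation for $\phi_{ij}$ from the Bach-flat condition $B_{\alpha\beta} = 0$. Since $B$ is fourth order in the metric and the leading behaviour in $r$ of a term $r^m \phi_{ij}$ under two $r$-derivatives is $m(m-1) r^{m-2}$, the pure-tangential components $B_{ij} = 0$, expanded in powers of $r$, should yield at order $r^{m-4}$ (or the appropriate order) a linear relation of the schematic form
\begin{align*}
c_1(m)\, \phi_{ij} + c_2(m)\, (\mathrm{tr}_{h_0}\phi)\, (h_0)_{ij} + (\text{intrinsic operators on } \Sigma^3 \text{ applied to lower data}) = 0,
\end{align*}
where the ``lower data'' terms vanish by the inductive hypothesis (everything below order $r^m$ already matches the hemisphere, which is Bach-flat). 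The coefficients $c_1(m), c_2(m)$ come from differentiating $r^m$ against the model operator; the crucial arithmetic fact to check is that $c_1(m) \neq 0$ for all $m \geq 5$, i.e. that no ``resonance'' occurs — this is where the explicit structure of the Bach operator linearized at the round hemisphere enters, and it parallels the indicial-root computation in the Fefferman-Graham theory. One should also track the trace: taking $\mathrm{tr}_{h_0}$ of the relation gives a scalar equation for $\mathrm{tr}_{h_0}\phi$ with coefficient $c_1(m) + 3 c_2(m)$, and one needs this to be nonzero as well, so that $\mathrm{tr}_{h_0}\phi = 0$ and then $\phi_{ij} = 0$. If the single equation $B_{ij}=0$ does not pin down $\phi$ by itself (e.g. if it only controls the trace-free part), the remaining components — the mixed components $B_{0i}=0$ and the normal component $B_{00}=0$, together with the Bianchi identity $\nabla^\alpha B_{\alpha\beta}=0$ from Lemma \ref{S tensor}(1) — should supply the missing relations, exactly as in the Hang-Wang argument where the full Einstein system is used.

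The main obstacle I anticipate is the bookkeeping in the inductive step: one must expand $B_{\alpha\beta}$ using the metric expansion of Lemma \ref{boundary expansion} (extended to all orders, which is where the $\cos^2 r$ ansatz is convenient since it lets us write $g$ as a known model plus a small remainder), isolate the coefficient of the correct power of $r$, and verify it is a non-degenerate linear operator in $\phi_{ij}$. A cleaner route, which I would attempt first, is to avoid computing $B$ directly: instead observe that both $g$ and the model $g_0 = dr^2 + \cos^2(r)\,h_0$ are Bach-flat with the same constant scalar curvature $R = 12$, the same induced data and totally geodesic boundary, so the difference $\psi = g - g_0$ satisfies a linear fourth-order elliptic equation $\mathcal{L}_{g_0}\psi = Q(\psi)$ where $Q$ is quadratic-and-higher, with vanishing Cauchy data of order $m-1$ on $\Sigma^3$; then a unique-continuation-type argument at the boundary (or simply matching Taylor coefficients in the ODE obtained by freezing tangential variables) gives that the Cauchy data of order $m$ also vanishes, completing the induction. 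The delicate point in either approach is the same: confirming the relevant indicial coefficient is nonzero so the recursion actually closes.
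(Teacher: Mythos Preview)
Your inductive framework is correct and matches the paper, but your execution plan is more complicated than necessary, and the part you flag as ``delicate'' --- the indicial/resonance check --- is exactly where your argument is incomplete.  The paper sidesteps this entirely.

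Rather than expanding the Bach tensor $B_{\alpha\beta}$ itself in powers of $r$ and extracting the coefficient at order $r^{m-4}$, the paper works with the exact geometric formula \eqref{h 4th derivative} for $\partial_r^4 h_{ij}$, which is valid at every point of the collar, not just at $r=0$.  Under the inductive hypothesis $h_{ij}(x,r) = \cos^2(r)\,\delta_{ij} + O(r^m)$ (in Fermi coordinates), all curvature quantities and their first covariant derivatives are known modulo $O(r^{m-2})$ and $O(r^{m-3})$ respectively.  The only term in \eqref{h 4th derivative} that is genuinely fourth order in $r$-derivatives of the metric is $\nabla_0\nabla_0 R_{0i0j}$; here the Bach-flat condition, written in the two equivalent forms \eqref{Bach two ways}, lets one solve for $\nabla_0\nabla_0 W_{0i0j}$, $\nabla_0\nabla_0 P_{00}$, and $\nabla_0\nabla_0 P_{ij}$ in terms of quantities involving at most three $r$-derivatives of $g$.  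Substituting back gives
\[
\partial_r^4 h_{ij} = 8\cos(2r)\,\delta_{ij} + O(r^{m-3}),
\]
and since $8\cos(2r) = (\cos^2 r)^{(4)}$ this immediately yields $h_{ij}(x,r) = \cos^2(r)\,\delta_{ij} + O(r^{m+1})$.  In this formulation the ``indicial coefficient'' is simply $m(m-1)(m-2)(m-3)$, automatically nonzero for $m\ge 4$, so there is nothing to verify.

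By contrast, your plan requires computing the linearized Bach operator at the hemisphere and reading off $c_1(m), c_2(m)$ from its indicial polynomial --- substantially more work, and precisely the step you leave open.  Your alternative via unique continuation for a fourth-order linear system likewise requires identifying the principal part and invoking a boundary unique-continuation result you do not supply.  The paper's trick --- isolate the single fourth-order-in-$r$ term in the explicit formula for $\partial_r^4 h$ and reduce it using Bach-flatness --- is both shorter and avoids all of this.
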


\begin{proof}[Proof of Lemma \ref{expansion m}] We have already established this identity for $m=1,2,3,4$.  The proof for general $m$ will follow from induction.  

Suppose (\ref{inifite order}) is valid for some $m \geq 4$. Our strategy is to calculate the fourth order derivative $h^{(4)}_{ij}$ with the Bach-flat condition and get an improvement on the order of derivatives in $r$. Without loss of generality, we may calculate in Fermi coordinates based at $p\in\Sigma^3$ and assume $h_{ij}(p,0) = \delta_{ij}$ and have $h_{ij} = \cos^2(r)\delta_{ij} + O(r^{m})$ by induction hypothesis. Note that $g_{0i} = 0$, $g_{00} = 1$ and 
\begin{align}
    h^{ij} = \frac{1}{\cos^{2}(r)}\delta_{ij} + O(r^{m}).
\end{align}
Note that $L_{ij}$ only involves differentiating the metric in $r$ once. Hence, we have
\begin{align}\label{2nd ff expansion}
    L_{ij} = \sin(r)\cos(r)\delta_{ij} + O(r^{m-1})
\end{align}
Note that curvature tensor only involves differentiating the metric in $r$ twice. Hence, we have
\begin{align}\label{Rm expansion}
    \begin{split}
          & R_{0i0j} = \cos^2(r)\delta_{ij} + O(r^{m-2}) \\
          & R_{ikj0} =  O(r^{m-2}) \\
          & R_{ikjl} = \cos^4(r)(\delta_{ij}\delta_{kl}-\delta_{il}\delta_{kj}) + O(r^{m-2}) \\
          & P_{ij} = \frac{1}{2}\cos^2(r)\delta_{ij} + O(r^{m-2}) \\
          & P_{i0} = O(r^{m-2}) \\
          & P_{00} = \frac{1}{2} + O(r^{m-2}) \\
          & R = 12 + O(r^{m-2}) \\
          & W = O(r^{m-2}) 
    \end{split}
\end{align}
Note that the derivative of curvature tensor only involves differentiating the metric in $r$ three times. Hence, we have
\begin{align}\label{normal Rm expansion}
    \nabla{Rm} = O(r^{m-3}) 
\end{align}

Finally we need to deal with the item $\nabla_0\nabla_0R_{0i0j}$ which involves differentiating the metric in $r$ four times. The Bach-flat condition will enable us to reduce the order of differentiation of the metric in $r$. The decomposition of curvature implies
\begin{align}\label{00R0i0j}
    \nabla_0\nabla_0R_{0i0j} = \nabla_0\nabla_0W_{0i0j} + \nabla_0\nabla_0P_{00}g_{ij} + \nabla_0\nabla_0P_{ij}.
\end{align}
Note that the metric is Bach-flat and has constant scalar curvature. By Bianchi identities, the Bach-flat condition can be written in two ways \cite{Der83}:
\begin{align}\label{Bach two ways}
\begin{split}
    & \nabla^{\gamma}\nabla^{\delta}W_{\alpha\gamma\beta\delta} + P^{\gamma\delta}W_{\alpha\gamma\beta\delta} = 0, \\
    & \Delta{P_{\alpha\beta}} - \frac{1}{6}\nabla_{\alpha}\nabla_{\beta}R + R_{\alpha\gamma\beta\delta}P^{\gamma\delta} - R_{\alpha\gamma}P^{\gamma}_{\beta} + P^{\gamma\delta}W_{\alpha\gamma\beta\delta} = 0.
\end{split}
\end{align}

Note that $\nabla_0\nabla_0W_{0i0j} = \nabla^0\nabla^0W_{0i0j}$. Now we simplify the three items in the right hand side of (\ref{00R0i0j}) by (\ref{Bach two ways}):
\begin{align}
\begin{split}
     \nabla_0\nabla_0W_{0i0j} & = \nabla^{\gamma}\nabla^{\delta}W_{{\gamma}i{\delta}j} - \nabla^{k}\nabla^{l}W_{kilj} - \nabla^{0}\nabla^{l}W_{{0}i{l}j} - \nabla^{k}\nabla^{0}W_{{k}i{0}j} \\
                             & = - P^{\gamma\delta}W_{{i}\gamma{j}\delta} - \nabla^{k}\nabla^{l}W_{kilj} - \nabla^{0}\nabla^{l}W_{{0}i{l}j} - \nabla^{k}\nabla^{0}W_{{k}i{0}j}
\end{split}
\end{align}
\begin{align}
\begin{split}
    \nabla_0\nabla_0P_{00} & = \Delta{P_{00}} - \nabla_k\nabla_kP_{00} \\
                           & = - R_{0\gamma0\delta}P^{\gamma\delta} + R_{0\gamma}P^{\gamma}_{0} - P^{\gamma\delta}W_{0\gamma0\delta} - \nabla_k\nabla_kP_{00}
\end{split}
\end{align}
\begin{align}
\begin{split}
     \nabla_0\nabla_0P_{ij} & = \Delta{P_{ij}} - \nabla_k\nabla_kP_{ij} \\
                           & = - R_{{i}\gamma{j}\delta}P^{\gamma\delta} + R_{{i}\gamma}P^{\gamma}_{j} - P^{\gamma\delta}W_{i\gamma{j}\delta} - \nabla_k\nabla_kP_{ij}
\end{split}
\end{align}
Note that the right hand sides only involve differentiating the metric in $r$ three times. By (\ref{Rm expansion}) we have
\begin{align}\label{simplified expansion}
\begin{split}
    \nabla_0\nabla_0W_{0i0j} & = O(r^{m-3}) \\
    \nabla_0\nabla_0P_{00} & = - \frac{1}{\cos^4(r)}\cos^2(r)\cdot\frac{3}{2}\cos^2(r) + \frac{1}{\cos^2(r)}3\cos^2(r)\cdot\frac{1}{2} + O(r^{m-3})\\
    & = O(r^{m-3}) \\
    \nabla_0\nabla_0P_{ij} & = -\frac{1}{2}\cos^2(r)\delta_{ij} - \frac{1}{\cos^4(r)}\cos^4(r)(\delta_{ij}\delta_{kl}-\delta_{il}\delta_{kj})\cdot{\frac{1}{2}\cos^2(r)\delta_{kl}} \\
    & + \frac{1}{\cos^2(r)}3\cos^2(r)\delta_{ik}\cdot\frac{1}{2}\cos^2(r)\delta_{kj} + O(r^{m-3}) = O(r^{m-3})
\end{split}
\end{align}
By (\ref{00R0i0j}) and (\ref{simplified expansion}) we have 
\begin{align}\label{00R0i0j simplified}
    \nabla_0\nabla_0R_{0i0j} = O(r^{m-3}).
\end{align}
Therefore, combining (\ref{Rm expansion})(\ref{normal Rm expansion})(\ref{00R0i0j simplified}) we calculate by (\ref{h 4th derivative})
\begin{align}
    \begin{split}
        \frac{\partial^4}{\partial{r^4}}h_{ij} & = -\dfrac{4\cos^2(r)\sin^2(r)\cos^2(r)}{\cos^4(r)}\delta_{ij} -\dfrac{4\cos^2(r)\sin^2(r)\cos^2(r)}{\cos^4(r)}\delta_{ij} \\
                                               & + 8\dfrac{\cos^4(r)}{\cos^2(r)}\delta_{ij} + O(r^{m-3}) \\
                                               & = 8\cos(2r)\delta_{ij} + O(r^{m-3}).
    \end{split}
\end{align}
This clearly implies that $h_{ij}(x,r) = \cos^2{(r)}h_{ij}(x,0) + O(r^{m+1})$. Hence, the lemma follows from induction.

\end{proof}

%%%%%%%%%%%%%%%%%%%%%%%  

Now we consider the double manifold of $(M^4,\Sigma^3,g)$ which is denoted by $(\overline{M},g_d)$. It is easy to see that $(\overline{M},g_d)$ is Bach-flat and has constant scalar curvature. Recall that in harmonic coordinates a Bach-flat metric with constant scalar curvature satisfies an elliptic system of fourth order \cite{TV05a}\cite{TV05b}. Indeed, if the scalar curvature is constant ($R=c$), then we have
\begin{align}
B_{\alpha\beta} = -\frac{1}{2}\Delta{E_{\alpha\beta}} - E^{\gamma\delta}W_{\alpha\gamma\beta\delta} +E_{\alpha}^{\gamma}E_{\beta\gamma} -\frac{1}{4}|E|^2g_{\alpha\beta} + \frac{1}{6}cE_{\alpha\beta}.
\end{align}
By the formula of Ricci tensor in harmonic coordinates in \cite{DK81}, we can write the Bach-flat equation in harmonic coordinates as
\begin{align}
0 =  B_{\alpha\beta}=  \frac{1}{4}g^{\gamma\delta}g^{\mu\lambda}\frac{\partial^4g_{\alpha\beta}}{\partial{x^{\gamma}}\partial{x^{\delta}}\partial{x^{\mu}}\partial{x^{\lambda}}} + \cdots
\end{align}
where the dots indicate terms involving at most three derivatives of the metric and the principal part of Bach tensor is just one quarter of the square of Laplacian. Hence, the metric $g_d$ is real analytic in harmonic coordinates. We define $\Omega$ to be the set of points where $g$ has constant sectional curvature $1$ in a neighborhood. Note that $\Omega$ is nonempty since $\Sigma^3\times(-\epsilon,\epsilon) \subset \Omega$ by (\ref{inifite order}) and the analytic property of metric $g$. Also note that $\Omega$ is an open set by definition. We now show that $\Omega$ conincides with $\overline{M}$. We argue by contradiction. Suppose there is a point $p\in\partial{\Omega}$ satisfying $p\not\in\Omega$. Choose a local harmonic coordinates $y^1,y^2,y^3,y^4$ on a connected neighborhood $U$ of $p$. The analytic functions $R_{ikjl} - g_{ij}g_{kl} + g_{il}g_{jk}$ vanishes on $U\cap\Omega\ne\emptyset$ and thereby vanish identically on $U$. Then $p\in\Omega$, which is a contradiction. Therefore, $\Omega = \overline{M}$ and thereby $(\overline{M},g_d)$ has constant sectional curvature $1$. It is then easy to see that $(M^4,\Sigma^3,g)$ is isometric to $(S^4_+,S^3,g_{S^4_+})$.

%%%%%%%%%%
\section{The proof of Theorem \ref{SYthm}}
%%%%%%%%%%%%%%%%%

Let $(X^n, g_X)$ and $(Y^n, g_Y)$ be closed, locally conformally flat manifolds with positive scalar curvature.  By Corollary 5 of \cite{SchoenYau}, the connected sum $Z = X \, \sharp \, Y$ obtained by deleting balls around $p$ and $q$ and identifying their boundaries, admits a locally conformally flat metric $\widetilde{g}$ with positive scalar curvature.  This follows from the general surgery result of \cite{SchoenYau}, since the metric $\widetilde{g}$ constructed in Theorem 3 of \cite{SchoenYau} is locally conformally flat in a neighborhood $U$ of the gluing point, and conformal to $g_X$ and $g_Y$ outside of $U$.

Let us apply this result when $X^n = S^4$, $g_X = g_0$ the round metric, $Y^n = S^3 \times S^1$, and $g_Y = h_0 \times d\theta^2$ is the standard product metric. Let $p = (0,0,0,0,1) \in S^4$ be the `north pole' of $S^4 \subset \mathbb{R}^5$, and let $q \in S^3 \times S^1$ be any point.  By the Schoen-Yau construction, there is a locally conformally flat metric $\widetilde{g}$ on $S^4 \, \sharp \, (S^3 \times S^1) \approx S^3 \times S^1$ with positive scalar curvature.  Moreover, $\widetilde{g}$ is conformal to $g_0$ on $S^4 \setminus U$, where $U$ is a small neighborhood of $p$.  In particular, the induced by $\widetilde{g}$ on the equatorial $S^3 = \{ (x^1, \dots , x^5) \in S^4 \, : \, x^5 = 0\} \subset S^4 \, \sharp \, (S^3 \times S^1)$ is umbilic.  Therefore, if $S^4_{-} = \{ (x^1, \dots , x^5) \in S^4 \, : \, x^5 \leq 0\} \approx B^4$ denotes the `lower hemisphere', then $(S^3 \times S^1 \setminus S^4_{-}, \widetilde{g})$ satisfies the conditions of Theorem \ref{SYthm}.

%%%%%%%%%%%%%%%%%%%%%%%%%
\section{appendix} 
%%%%%%%%%%%%%%%%%%%%%%%%%%

In this appendix we give the proof of Theorem \ref{AppThm}.  Since most of the formulas are fairly standard we will only provide a sketch. 

Let $(M^4, \Sigma^3 = \partial M^4, g)$ be a compact Riemannian manifold with boundary.   Given a symmetric $2$-tensor $v$, let $g(t) = g + tv$; then $g(0) = g$ and $g'(0) = v$.  By the formula for the variation of the metric tensor and volume form, it is readily calculated 
\begin{align*}
\frac{d}{dt} \mathcal{W}(g(t)) \big|_{t=0} &= \frac{d}{dt} \int_{M^4} \| W_{g(t)} \|^2 \, dv_{g(t)}  \big|_{t = 0} \\
&= \frac{d}{dt} \frac{1}{4} \int_{M^4}   (W_{g(t)})^{\alpha \mu \beta \nu} (W_{g(t)})_{\alpha \mu \beta \nu} \, dv_{g(t)}  \big|_{t = 0} \\
&= \int_{M^4} \big\{-   W^{\alpha \mu \beta \nu} \nabla_{\alpha} \nabla_{\beta}  v_{\mu \nu} -  P_{\alpha \beta} W^{\alpha \mu \beta \nu} v_{\mu \nu} \big\} \, dv_g.
\end{align*}
If we integrate by parts in the first term, we obtain 
\begin{align}  \label{D1} 
\frac{d}{dt} \mathcal{W}(g(t)) \big|_{t=0} = -\int_{M^4} B^{\mu \nu} v_{\mu \nu} \, dv_g + \oint_{\Sigma^3} \big\{\nabla_{\alpha} W^{\alpha \mu 0 \nu} v_{\mu \nu} - W^{0 \mu \beta \nu} \nabla_{\beta} v_{\mu \nu} \big\} \, d\sigma_h,
\end{align}
where $B$ is the Bach tensor\footnote{There is often a disagreement in the literature whether the Bach tensor is the $L^2$-gradient of $\mathcal{W}$ or {\em minus} the gradient.}.  Using the convention that Latin indices indicate tangential components, we can rewrite the boundary integrand as
\begin{align} \label{bdyzed} \begin{split}
\oint_{\Sigma^3} & \big\{ \nabla_{\alpha} W^{\alpha \mu 0 \nu} v_{\mu \nu} - W^{0 \mu \beta \nu} \nabla_{\beta} v_{\mu \nu} \big\} \, d\sigma_h \\
&= \oint_{\Sigma^3} \big\{ \nabla_{\alpha}  W^{\alpha 0 0 j} v_{0 j} + \nabla_{\alpha} W^{\alpha i 0 j} v_{i j} - W^{i 0 j 0} \nabla_0 v_{ij} + W^{i 0 j 0} \nabla_j v_{i0} + W^{i 0 j k} \nabla_j v_{ik} \big\} \, d\sigma_h.
\end{split}
\end{align}

Since we are restricting to metrics for which the boundary is umbilic we assume that $v$ preserves the umbilic condition to first order; i.e., 
\begin{align*}
\frac{d}{dt} \big\{ L(g(t))_{ij} - \frac{1}{3} H(g(t)) g(t)_{ij} \big\} \Big|_{t=0} = 0. 
\end{align*}
By standard formulas for the variation of the second fundamental form (see, for example, (2.6) in \cite{Araujo}) this implies 
\begin{align} \label{Lp}
0 = \frac{1}{2} \big( \nabla_i v_{j0} + \nabla_j v_{i0} - \nabla_0 v_{ij} \big) - \frac{1}{3} H v_{ij} - \frac{1}{3} \Big( \big[ \nabla^{\alpha}v_{\alpha 0} - \frac{1}{2} \nabla_0 ( \mbox{tr }v) - \frac{1}{2} \nabla_0 v_{00} \big] - L^{k\ell} v_{k \ell} \Big) h_{ij}.
\end{align} 

\smallskip

\begin{remark}  Symmetric $2$-tensors on $M^4$ whose restriction to the boundary have vanishing trace and satisfy (\ref{Lp}) can be viewed as the formal tangent space to $\mathcal{M}_0(M^4,\Sigma^3)$, the space of Riemannian metrics on $M^4$ with umbilic boundary.  
\end{remark}

\smallskip

Pairing both sides of (\ref{Lp}) with $W^{i0j0}$ and integrating over $\Sigma^3$ gives 
\begin{align} \label{LZ}  
0 = \oint_{\Sigma^3} \big\{ W^{i 0 j 0} \nabla_j v_{i0} - \frac{1}{2} W^{i 0 j 0} \nabla_0 v_{ij} - \frac{1}{3} H W^{i 0 j 0} v_{ij} \big\} \, d\sigma_h,
\end{align}
which we rewrite as
\begin{align} \label{LZ2} 
\oint_{\Sigma^3} W^{i 0 j 0} \nabla_0 v_{ij} \, d\sigma_h = \oint_{\Sigma^3} \big\{ 2 W^{i 0 j 0} \nabla_j v_{i0} - \frac{2}{3} H W^{i 0 j 0} v_{ij} \big\} \, d\sigma_h. 
\end{align}
Substituting this into (\ref{bdyzed}) we obtain 
\begin{align} \label{bdy1} \begin{split}
\oint_{\Sigma^3} & \big\{ \nabla_{\alpha} W^{\alpha \mu 0 \nu} v_{\mu \nu} - W^{0 \mu \beta \nu} \nabla_{\beta} v_{\mu \nu} \big\} \, d\sigma_h \\
&= \oint_{\Sigma^3} \big\{ \nabla_{\alpha}  W^{\alpha 0 0 j} v_{0 j} + \nabla_{\alpha} W^{\alpha i 0 j} v_{i j}   +  \frac{2}{3} H W^{i 0 j 0} v_{ij}   -  W^{i 0 j 0} \nabla_j v_{i0} + W^{i 0 j k} \nabla_j v_{ik} \big\} \, d\sigma_h.
\end{split}
\end{align}

Next, we rewrite the last two terms above via integration by parts.  On $\Sigma^3$, define the symmetric two-tensor $D_{ij} = W_{i 0 j 0}$ and the one-form $\eta_k = v_k$.  More precisely, for tangent vectors $X,Y \in T\Sigma^3$,
\begin{align*}
D(X,Y) &= W(X,\frac{\partial}{\partial r}, Y, \frac{\partial}{\partial r} ), \\
\eta(X) &= v(X,\frac{\partial}{\partial r}). 
\end{align*}
Using the formulas for the Christoffel symbols in (\ref{Ch1}), we can write 
\begin{align} \label{DMS} \begin{split}
\nabla_j v_{i 0} &= \nabla^{\Sigma}_j \eta_i - \frac{1}{3} H h_{ij} v_{00} + \frac{1}{3} H v_{ij}, \\
\nabla^{\Sigma}_j D^{ij} &= \nabla_j W^{i 0 j 0} = \nabla_{\alpha} W^{\alpha 0 i 0}.
\end{split} 
\end{align}
Therefore, we can express the next to last term in (\ref{bdy1}) as 
\begin{align} \label{bdy2}   \begin{split}
\oint_{\Sigma^3} W^{i 0 j 0} \nabla_j v_{i0} \, d\sigma_h &= \oint_{\Sigma^3} W^{i 0 j 0} \{ \nabla^{\Sigma}_j \eta_i  - \frac{1}{3} H h_{ij} v_{00} + \frac{1}{3} H v_{ij} \} \, d\sigma_h  \\
&= \oint_{\Sigma^3} D^{ij}   \nabla^{\Sigma}_j \eta_i  \, d\sigma_h + \frac{1}{3} \oint_{\Sigma^3} H W^{i 0 j 0} v_{ij} \, d\sigma_h.  \\
\end{split}
\end{align}
Integrating by parts on $\Sigma^3$ and using (\ref{DMS}) gives 
\begin{align} \label{bdy3} \begin{split} 
\oint_{\Sigma^3} W^{i 0 j 0} \nabla_j v_{i0} \, d\sigma_h &= - \oint_{\Sigma^3} \nabla^{\Sigma}_j D^{ij} \eta_i  \, d\sigma_h + \frac{1}{3} \oint_{\Sigma^3} H W^{i 0 j 0} v_{ij} \, d\sigma_h \\
&= - \oint_{\Sigma^3} \nabla_{\alpha} W^{\alpha 0 i 0} \eta_i  \, d\sigma_h + \frac{1}{3} \oint_{\Sigma^3} H W^{i 0 j 0} v_{ij} \, d\sigma_h \\
&= - \oint_{\Sigma^3} \nabla_{\alpha} W^{\alpha 0 i 0} v_{i0}  \, d\sigma_h + \frac{1}{3} \oint_{\Sigma^3} H W^{i 0 j 0} v_{ij} \, d\sigma_h \\
&= \oint_{\Sigma^3} \big\{  \nabla_{\alpha} W^{\alpha 0 0 j} v_{j0} + \frac{1}{3} H W^{i 0 j 0} v_{ij} \big\} \, d\sigma_h,
\end{split}
\end{align}
where in the last line we used the symmetries of the Weyl tensor and re-indexed.  

We use a similar argument to rewrite the last term on the right in (\ref{bdy1}).  This time we define the tensor $A$ on the boundary by $A_{ijk} = W_{i 0 jk}$.  Again using the formulas for the Christoffel symbols on $\Sigma^3$, it follows that
\begin{align} \label{DMS2} \begin{split}
\nabla_j v_{ik} &= \nabla^{\Sigma}_j v_{ik} - \frac{1}{3} H v_{0k} h_{ij} - \frac{1}{3} H v_{i 0} h_{jk}, \\
\nabla^{\Sigma}_j A^{ijk} &= \nabla_j W^{i 0 jk} + H W^{i 00k} = \nabla_{\alpha} W^{i 0  \alpha k} - \nabla_0 W^{i 0 0 k} + H W^{i 00k}. 
\end{split}
\end{align}
Therefore, we can express the last term in (\ref{bdy1}) as 
\begin{align} \label{bdy4}  \begin{split} 
\oint_{\Sigma^3} W^{i 0 j k} \nabla_j v_{ik}  \, d\sigma_h &= \oint_{\Sigma^3} W^{i 0 j k} \big\{ \nabla^{\Sigma}_j v_{ik} - \frac{1}{3} H v_{0k} h_{ij} - \frac{1}{3} H v_{i 0} h_{jk} \big\} \, d\sigma_h \\
&= \oint_{\Sigma^3} W^{i 0 j k} \nabla^{\Sigma}_j v_{ik} \, d\sigma_h \\
&= \oint_{\Sigma^3} A^{ijk} \nabla^{\Sigma}_j v_{ik} \, d\sigma_h.
\end{split}
\end{align}
Integrating by parts and using (\ref{DMS2}), we have 
\begin{align} \label{bdy5} \begin{split} 
\oint_{\Sigma^3} W^{i 0 j k} \nabla_j v_{ik}  \, d\sigma_h &= -\oint_{\Sigma^3} \nabla^{\Sigma}_j A^{ijk} v_{ik} \, d\sigma_h \\
&= \oint_{\Sigma^3} \big\{  - \nabla_{\alpha} W^{i 0  \alpha k} v_{ik} + \nabla_0 W^{i 0 0 k} v_{ik} - H W^{i 00k}  v_{ik}  \big\} \, d\sigma_h \\
&= \oint_{\Sigma^3} \big\{  \nabla_{\alpha} W^{\alpha i 0 j} v_{ij} - \nabla_0 W^{i 0 j 0} v_{ij} + H W^{i 0 j 0}  v_{ij}  \big\} \, d\sigma_h,
\end{split} 
\end{align}
where once again we re-indexed and used the symmetries of $W$. 

We now substitute (\ref{bdy3}) and (\ref{bdy5}) into (\ref{bdy1}) to get 
\begin{align} \label{bdy6} \begin{split}
\oint_{\Sigma^3} & \big\{ \nabla_{\alpha} W^{\alpha \mu 0 \nu} v_{\mu \nu} - W^{0 \mu \beta \nu} \nabla_{\beta} v_{\mu \nu} \big\} \, d\sigma_h \\ 
&= \oint_{\Sigma^3} \Big\{  \nabla_{\alpha}  W^{\alpha 0 0 j} v_{0 j} + \nabla_{\alpha} W^{\alpha i 0 j} v_{i j}   +  \frac{2}{3} H W^{i 0 j 0} v_{ij}  - \big[  \nabla_{\alpha} W^{\alpha 0 0 j} v_{j0} - \frac{1}{3} H W^{i 0 j 0} v_{ij} \big] \\
 & \ \ \ + \big[ \nabla_{\alpha} W^{\alpha i 0 j} v_{ij} - \nabla_0 W^{i 0 j 0} v_{ij} + H W^{i 0 j 0}  v_{ij}  \big]  \Big\} \, d\sigma_h \\
 &= \oint_{\Sigma^3} \big\{ 2 \nabla_{\alpha}W^{{\alpha}i0j} v_{ij}  - \nabla_0W^{0i0j} v_{ij} + \frac{4}{3}HW^{0i0j} v_{ij} \big\} \, d\sigma_h.
\end{split}
\end{align}

By the definition of the tensor $S$, 
\begin{align*}
\oint_{\Sigma^3} S^{ij} v_{ij} \, d\sigma_h &= \oint_{\Sigma^3} \big\{ 2 \nabla_{\alpha}W^{{\alpha}i0j} v_{ij}  - \nabla_0W^{0i0j} v_{ij} + \frac{4}{3}HW^{0i0j} v_{ij} \big\} \, d\sigma_h.
\end{align*}
Therefore, from (\ref{bdy6}), (\ref{D1}) and (\ref{bdyzed}) we conclude 
\begin{align}  \label{D2}
\frac{d}{dt} \mathcal{W}(g(t)) \big|_{t=0} = -\int_{M^4} B^{\mu \nu} v_{\mu \nu} \, dv_g + \oint_{\Sigma^3}  S^{ij} v_{ij} \, d\sigma_h.
\end{align}
By restricting to variations supported in the interior of $X^4$, we immediately see that a metric that a critical metric for $\mathcal{W}$ over variations that preserve the umbilic condition to first order must be Bach-flat.  In particular, for any such variation $v$ we have 
 \begin{align}  \label{D3}
\frac{d}{dt} \mathcal{W}(g(t)) \big|_{t=0} = \oint_{\Sigma^3}  S^{ij} v_{ij} \, d\sigma_h.
\end{align}
 To see that $g$ is also $S$-flat, let $v^{\Sigma}$ be a symmetric $2$-tensor defined on $\Sigma^3$, and assume $v^{\Sigma}$ is trace-free with respect to $h = g|_{\Sigma^3}$, the induced metric.  We can extend $v^{\Sigma}$ trivially to a collar neighborhood $U$ of the boundary by using the identification of $U$ with $\Sigma^3 \times [0,\epsilon)$ for some $\epsilon > 0$ small, as described in Section \ref{gT}.   More precisely, if $X,Y$ are tangent vectors defined at a point $p = (x,r) \in U$, then we can write
\begin{align*}
X = X^T + X^0 \frac{\partial}{\partial r}, \ \ Y = Y^T + Y^0 \frac{\partial}{\partial r},
\end{align*}
where $X^T, Y^T \in T_x \Sigma^3$.  Then define $v$ by the formula 
\begin{align*} 
v(X,Y) = v^{\Sigma}(X^T,Y^T).
\end{align*}
We then use a cut-off function to extend $v$ to all of $M^4$, so that $v \equiv 0$ away from $\Sigma^3$.  Note that with respect to the coordinate system in $U$ described in Section \ref{gT}, near $\Sigma^3$ we have
\begin{align} \label{vform}
v = v^{\Sigma}_{ij} \, dx^i \, dx^j. 
\end{align}
By construction it follows that $v$ is trace-free (with respect to $g$), and on $\Sigma^3$ we have  
\begin{align*}
\nabla_0 v_{ij} &= 0, \\
\nabla_0 v_{00} &= 0, \\
\nabla_i v_{j0} &= 0, \\
L^{ij} v_{ij} &= \frac{1}{3} H \, \mbox{tr }v^{\Sigma} = 0. 
\end{align*}
Therefore, $v$ satisfies the constraint in (\ref{Lp}), meaning that variations $g_t$ of the metric $g$ with $\frac{d}{dt}g_t \vert_{t=0} = v$ preserve the umbilic condition to first order.  As we observed above, this implies 
 \begin{align}  \label{D4} \begin{split} 
0 &= \frac{d}{dt} \mathcal{W}(g_t) \big|_{t=0} \\
&= \oint_{\Sigma^3}  S^{ij} v_{ij} \, d\sigma_h \\
& = \oint_{\Sigma^3}  S^{ij} v^{\Sigma}_{ij} \, d\sigma_h. 
\end{split}
\end{align}
Since $v^{\Sigma}$ was an arbitrary trace-free $2$-tensor on $\Sigma^3$, it follows that $S = 0$.

\bibliographystyle{amsplain}

\end{document}